\newtheorem{theorem}{Theorem}[section]
\newtheorem{lemma}[theorem]{Lemma}
\newtheorem{proposition}[theorem]{Proposition}
\newtheorem{corollary}[theorem]{Corollary}
\theoremstyle{definition}
\theoremstyle{remark}
\numberwithin{equation}{section}
\newcommand{\id}{\operatorname{id}}
\newcommand{\lip}{\operatorname{lip}}
\newcommand{\Lip}{\operatorname{Lip}}
\newcommand{\lam}{\Lambda_\alpha(X)}
\newcommand{\lamo}{\Lambda_\alpha^0(X)}
\newcommand{\lbm}{\Lambda_\beta(Y)}
\newcommand{\lbmo}{\Lambda_\beta^0(Y)}
\newcommand{\al}{\alpha}
\newcommand{\var}{\varphi}
\newcommand{\BN}{\mathbb{N}}
\newcommand{\BD}{\mathbb{D}}
\newcommand{\BC}{\mathbb{C}}
\newcommand{\ra}{\rightarrow}
\begin{document}

\setcounter{page}{1}

\title[Weighted Composition Operators ...]{Weighted Composition Operators on Spaces of Analytic Vector-valued   Lipschitz
Functions}

\author[K. Esmaeili ]{K. Esmaeili,$^1$ }

\address{$^{1}$Department of Engineering, Ardakan University, P. O. Box 89518-95491, Ardakan, Yazd, Iran.}
\email{\textcolor[rgb]{0.00,0.00,0.84}{esmaeili@ardakan.ac.ir;
k.esmaili@gmail.com}}


\let\thefootnote\relax

\subjclass[2010]{Primary 46E40; Secondary 47A56, 47B33.
}

\keywords{Analytic vector-valued   Lipschitz functions; vector-valued Bloch spaces; weighted composition operators;  compact operators.}

\begin{abstract}
Let $\varphi$  be an analytic self-map  of $\BD$ and  $\psi$ be an analytic operator-valued function  on $\BD$, where $\BD$ is the unit disk.
We provide necessary and sufficient conditions for the boundedness
and compactness of weighted composition operators  $W_{\Psi,\phi}: f\mapsto\Psi(f\circ\phi)$ on $\Lip_A(\overline\BD,X,\al)$ and $\lip_A(\overline\BD,X,\al)$, the spaces of analytic $X$-valued  Lipschitz functions $f$,  where $X$ is a complex Banach space and $\al\in(0,1]$.
\end{abstract} \maketitle

\section{Introduction and preliminaries}


Let $(S,d)$ be  a  metric space, $X$ be a Banach space
and $\alpha \in (0, 1]$. The space of all functions
$f : S \rightarrow X$ for which
$$p_\al(f) = \sup\left\{\frac{\|f(s_1) - f(s_2)\| }{d^\al (s_1,s_2)}: s_1,s_2\in S, s_1\neq s_2\right\}
< \infty,$$
 is denoted by $\Lip_\al(S,X)$. The  subspace of
functions $f$ for which
$$\lim_{d(s_1,s_2)\rightarrow 0} \frac{\|f(s_1)-f(s_2)\|}{d^\al(s_1,s_2)}
=0,$$ is denoted by $\lip_\al (S,X)$. The spaces $\Lip_\al(S,X)$ and
$\lip_\al(S,X)$  equipped with  norm
$\|f\|_\al =\|f\|_S + p_\al(f)$  are Banach spaces, where, 
$$\|f\|_S =\sup_{s\in S}\{\|f(s)\|: s\in S\}.$$
These  spaces are called $X$-valued Lipschitz
spaces.
The spaces $\Lip_\al(S,X)$
and $\lip_\al(S,X)$ were studied by Johnson for the  first time \cite{John}.


For a  Banach space $X$, let $H(\BD, X)$ be the    space of all analytic $X$-valued  functions on the open unit disc $\BD$
 and $A(\overline{\BD},X)$ be   the Banach space of all continuous functions $f:\overline{\BD}\ra X$
which are analytic on $\BD$.
For  $\alpha \in (0, 1]$ we define the spaces  
\begin{align*}
\lam=\Lip_\al(\BD,X)\cap H(\BD,X)
\end{align*}
and
\begin{align*}
\Lip_A(\overline{\BD},X,\al)=\Lip_\al(\overline{\BD},X)\cap A(\overline{\BD},X).
\end{align*}
Clearly,  $\lam$ and $\Lip_A(\overline{\BD},X,\al)$, equipped with  Lipschitz norm $\|\cdot\|_\al$, are Banach spaces and the spaces
\begin{align*}
\lamo=\lip_\al(\BD,X)\cap H(\BD,X)
\end{align*}
and
\begin{align*}
\lip_A(\overline{\BD},X,\al)=\lip_\al(\overline{\BD},X)\cap A(\overline{\BD},X)
\end{align*}
are Banach subspaces of 
 $\lam$ and $\Lip_A(\overline{\BD},X,\al)$, respectively.

We consider the weighted Banach   space
\begin{align*}
H^\infty_\nu(X)=\{f\in H(\BD, X): \|f\|_\nu=\sup_{z\in\BD}\nu(z)\|f(z)\|<\infty\}
\end{align*}
endowed with norm $\|\cdot\|_\nu$, where $\nu:\BD\ra (0,+\infty)$ is a bounded continuous weight function.

 For a positive real number $\al$,  $B_\al(X)$  denotes the $X$-valued Bloch type space of all
 analytic functions $ f : \BD\rightarrow X$  satisfying
$$ \sup_{z\in\BD}(1-|z|^2)^\al\|f'(z)\|<\infty.$$
  The space $B_\al(X)$ endowed with   norm 
  $$\|f\|_{B_\al} =\|f(0)\|+\sup_{z\in\BD}(1-|z|^2)^\al\|f'(z)\|,\ \ (f\in B_\al(X)),$$
  is a Banach space.

In the case   $X=\BC$, we omit $X$ in the notation.

For Banach spaces $X$  and $Y$, by
 $L(X,Y)$ ($K(X,Y)$),  we mean the Banach space of all bounded (compact) linear operators from $X$  to $Y$. 
 Let $S(\overline\BD, X)$ and $S(\overline\BD, Y)$ be  subspaces of $A(\overline\BD, X)$ and $A(\overline\BD, Y)$, respectively. Let $\phi\in A(\overline\BD)$ be a nonconstant self map of $\overline\BD$ and $\Psi:\overline\BD\rightarrow L(X,Y)$  be a continuous operator-valued function analytic on $\BD$. Then the weighted composition operator $W_{\Psi,\phi}$ from $S(\overline\BD, X)$ to $S(\overline\BD, Y)$ is  defined to be the linear operator of the form  
 \begin{align*}
 W_{\Psi,\phi}( f)(z)=\psi(z)(f(\var(z)),\ \ 
(f\in S(\overline\BD, X), \ \ z\in\overline\BD).
\end{align*}
 For simplicity of notation, we write $\Psi_z$ instead of    $\Psi(z)$. 

Note that if  $\Psi_z$ is the identity map on $X$ for every $z\in\overline\BD$, then $W_{\Psi,\phi}$ is the composition operator on $S(\overline\BD,X)$.
In the scalar case, a weighted
composition operator is a composition operator followed by a multiplier.

There is recent interest into properties of the composition operators and weighted composition operators  between Banach  spaces of vector-valued functions.
For instance, the weakly compact composition operators on  Hardy spaces, weighted Bergman spaces, Bloch spaces  and  BMOA, in the vector-valued case, have been characterized in \cite{bonet,laitila4,laitila3,laitila2,saksman}. Weighted composition operators between vector-valued Lipschitz spaces and weighted Banach spaces of vector-valued analytic functions have been studied in 
 \cite{ bjma,laitila}.
Also, composition operators on  analytic Lipschitz spaces in the scalar-valued case   have been  investigated  in   \cite{beh2,san}.
The present study is aimed at finding some necessary and sufficient conditions for    boundedness
and  compactness of  weighted composition operators on the spaces of analytic  $X$-valued Lipschitz functions.

The rest of this paper is designed as follows.  In section 2 we shall  characterize bounded and compact weighted composition operators between analytic vector-valued Lipschitz 
spaces.
Section 3 is devoted to discussing this kinds of operators
 between analytic vector-valued  little Lipschitz  spaces.

\section{Weighted composition operators on $\Lip_A(\overline\BD,X,\al)$ and $\lam$ }
In this section, we provide necessary and sufficient conditions for weighted composition operators between analytic vector-valued Lipschitz spaces to be bounded and compact. In what follows, we will assume that $X$ and $Y$ are Banach spaces. 

We begin with some elementary properties of analytic vector-valued Lipschitz spaces.  The best reference here is \cite{bonet}.
 Let $E$ be  a Banach subspace  of $H(\BD)$ which contains  constant functions and its closed unit ball $U(E)$ is compact for the compact open topology.
Then the space
$$^*E:=\{u\in E^* : u|_{U(E)}\quad is \ \ co-continuous\},$$
endowed with the norm induced by $E^*$, is a Banach space and 
the evaluation map $f\mapsto [u\mapsto u(f)]$ from $E$ into $(^*E)^*$,   is an isometric isomorphism. In particular,   $^*E$ is a predual of $E$.
Furthermore, the vector-valued space
$$E[X]:=\{f\in H(\BD,X): x^*\circ f\in E, \quad x^*\in X^*\},$$ 
 by  the norm $\|f\|_{E[X]}=\sup_{\|x^*\|\leq 1}\|x^*\circ f\|$ is a Banach space. 
 
Bonet, et al. in  \cite[Lemma 10]{bonet} argued that the map $\Delta:\BD\rightarrow  ^*E$, $\Delta(z)=\delta_z$, where $\delta_z$ is the evaluation map on $E$, is analytic and the linear operator  $\chi:L(^*E,X)\rightarrow E[X]$, $\chi(T)=T\circ \Delta$  is bounded.  Defining
  $\psi(g)(u):X^*\rightarrow\BC$ by  $\psi(g)(u)(x^*)=u(x^*\circ g)$ for $g\in E[X]$ and $u\in ^*E$, they showed that 
$\psi(g)\in L(^*E,X^{**})$ and $\psi(g)(\delta_z)\in L(^*E,X)$. Besides, using operators $\chi$ and $\psi$, they  deduced that  
the space $E[X]$ is isomorphic to $L(^*E,X)$. 

In the following proposition we use the above mentioned   result for the spaces $\Lambda_\al[X]$ and $B_{1-\al}[X]$ to show that  the norms $\|\cdot\|_{\al}$  and $\|\cdot\|_{B_{1-\al}(X)}$ are equivalent, whenever $\al\in(0,1)$.

\begin{proposition}\label{pro}
Let $\al\in(0,1)$. Then $\lam=B_{1-\al}(X)$ and the norms $\|\cdot\|_{\al}$  and $\|\cdot\|_{B_{1-\al}}$ are equivalent. 
\end{proposition}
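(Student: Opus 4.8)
The plan is to reduce the vector-valued statement to the classical scalar identity by invoking the isomorphism $E[X]\cong L({}^*E,X)$ recalled above, applied to the two scalar spaces $E=\Lambda_\al$ and $E=B_{1-\al}$. First I would record the scalar case: for $\al\in(0,1)$ one has $\Lambda_\al=B_{1-\al}$ with equivalent norms. This is the analytic Hardy--Littlewood theorem, which I would either cite or prove directly by the two standard estimates. For the inclusion $B_{1-\al}\subseteq\Lambda_\al$, given $g\in B_{1-\al}$ and $z_1,z_2\in\BD$ I would integrate $g'$ along the segment $[z_2,z_1]\subset\BD$ and bound $\int_{[z_2,z_1]}(1-|\zeta|)^{\al-1}\,|d\zeta|\le C_\al|z_1-z_2|^\al$; for the reverse inclusion I would apply the Cauchy formula $g'(z)=\frac{1}{2\pi i}\oint_{|\zeta-z|=r}\frac{g(\zeta)-g(z)}{(\zeta-z)^2}\,d\zeta$ with $r=(1-|z|)/2$, which yields $(1-|z|)^{1-\al}|g'(z)|\le 2^{1-\al}p_\al(g)$.

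Second, I would verify the hypotheses needed to invoke the cited isomorphism for both $E=\Lambda_\al$ and $E=B_{1-\al}$: each is a Banach subspace of $H(\BD)$ containing the constants whose closed unit ball is compact in the compact-open topology. Granting the scalar identity, the two spaces are equal as sets of functions with equivalent norms, so their preduals coincide as sets and carry equivalent norms, ${}^*\Lambda_\al={}^*B_{1-\al}$ (comparable unit balls in the same compact-open topology give the same co-continuous functionals). Since $\Delta(z)=\del_z$ is the common evaluation map for both spaces, the operator $\chi(T)=T\circ\Delta$ is literally the same in both instances; hence $\Lambda_\al[X]$ and $B_{1-\al}[X]$ are realized as one and the same set $\{T\circ\Delta:T\in L({}^*E,X)\}$ of $X$-valued functions, with equivalent norms.

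Third, I would identify the bracket spaces with the native ones. For $B_{1-\al}$ this is an isometry: taking the supremum over $\|x^*\|\le1$ in $\|x^*\circ f\|_{B_{1-\al}}=|x^*(f(0))|+\sup_z(1-|z|^2)^{1-\al}|x^*(f'(z))|$ and exchanging suprema gives $\|f\|_{B_{1-\al}[X]}=\|f\|_{B_{1-\al}(X)}$. For $\Lambda_\al$ the same exchange of suprema gives $\max(\|f\|_\BD,p_\al(f))\le\|f\|_{\Lambda_\al[X]}\le\|f\|_\BD+p_\al(f)$, so $\Lambda_\al[X]=\lam$ with equivalent norms. Chaining the three steps delivers $\lam=B_{1-\al}(X)$ with equivalent norms.

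The step I expect to be the main obstacle is the bookkeeping in the second paragraph: one must confirm that the abstract isomorphism is the \emph{identity on functions} and not merely a Banach-space isomorphism, i.e.\ that $\chi$ sends the same operator to the same analytic function in both instances, so that sharing a predual genuinely forces $\lam$ and $B_{1-\al}(X)$ to be the same function space. Alternatively, the whole proposition can be proved directly in the vector-valued setting by repeating the two Hardy--Littlewood estimates of the first step verbatim, replacing $|\cdot|$ by $\|\cdot\|_X$ and using the vector-valued Cauchy integral; in that route the integral bound $\int_{[z_2,z_1]}(1-|\zeta|)^{\al-1}\,|d\zeta|\le C_\al|z_1-z_2|^\al$ is the only genuinely technical point.
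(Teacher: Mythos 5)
Your proposal follows essentially the same route as the paper: the scalar Hardy--Littlewood identity $\Lambda_\al=B_{1-\al}$, the coincidence of the preduals ${}^*\Lambda_\al={}^*B_{1-\al}$, and the Bonet--Doma\'nski--Lindstr\"om factorization of the identity through $L({}^*E,X)$ via $\psi$ and $\chi$; you additionally justify the identifications $\Lambda_\al[X]=\Lambda_\al(X)$ and $B_{1-\al}[X]=B_{1-\al}(X)$, which the paper asserts without proof. One minor correction: the exchange of suprema for $B_{1-\al}$ gives only the two-sided estimate $\max\bigl(\|f(0)\|,\sup_z(1-|z|^2)^{1-\al}\|f'(z)\|\bigr)\le\|f\|_{B_{1-\al}[X]}\le\|f\|_{B_{1-\al}(X)}$, i.e.\ an equivalence rather than the isometry you claim, but this is all the argument needs.
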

\begin{proof}
Using   Hardy-Littlewood theorem for $\al\in (0,1)$, we can see that
$\Lambda_\al=B_{1-\al}$ and $\|\cdot\|_\al\asymp\|\cdot\|_{B_{1-\al}}$ (i.e.   for some  positive constants $a$ and $ b$,  $a\|\cdot\|_\al\leq \|\cdot\|_{B_{1-\al}}\leq b\|\cdot\|_\al$).
Hence,  $^*\Lambda_\al=^*B_{1-\al}$, where $^*\Lambda_\al$ and $^*B_{1-\al}$ are the   preduals of $\Lambda_\al$ and $B_{1-\al}$ mentioned above, respectively. Thus $\Lambda_\al[X]=B_{1-\al}[X]$ and the linear operators
\begin{align*}
\id:\Lambda_\al[X]\xlongrightarrow{\psi} L(^*\Lambda_\al,X)=  L(^*B_{1-\al},X) \xlongrightarrow{\chi} B_{1-\al}[X]
\end{align*}
and
\begin{align*}
 \id:B_{1-\al}[X]\xlongrightarrow{\psi} L(^*B_{1-\al},X)=L(^*\Lambda_\al,X) \xlongrightarrow {\chi}\Lambda_\al[X]
\end{align*}
 are bounded. This shows that   $\|\cdot\|_{\Lambda_\al[X]}\asymp\|\cdot\|_{B_{1-\al}[X]}$. Since
 $\Lambda_\al(X)=\Lambda_\al[X]$ and  $B_{1-\al}(X)=B_{1-\al}[X]$,  we conclude that
 $$\|\cdot\|_{\al}=\|\cdot\|_{\Lambda_\al[X]}\asymp\|\cdot\|_{B_{1-\al}[X]}=\|\cdot\|_{B_{1-\al}}.$$
 \end{proof}
 
 From Proposition \ref{pro}   we deduce that  for $\al\in(0,1)$
  the norm
 $$\|f\|_{\Lambda_\al(X)}=\|f(0)\|+\sup_{z\in\BD}(1-|z|^2)^{(1-\al)}\|f'(z)\|, \quad (f\in \Lambda_\al(X))$$
 defines an equivalent norm on $\Lambda_\al(X)$. Hereafter we use this norm for $\Lambda_\al(X)$ and $\lamo$, whenever $\al\in(0,1)$.

In \cite{san}, Mahyar and Sanatpour proved that every function $f\in\Lip_\al(\BD)$  has a unique continuous  extension $E(f)$  to  $\overline{\BD}$, such  that $E(f)\in\Lip_\al(\overline{\BD})$. In fact,  for every  $w\in \partial\BD$ they  defined $E(f)(w)=\lim_{n\ra\infty}f(z_n)$, where  $\{z_n\}$ is  any
sequence  in $\BD$ converging to $w$. By the same method, one can  see that 
  every function $f\in\Lip_\al(\BD,X)$ has a unique continuous  extension $E(f)$  to  $\overline{\BD}$ such  that $E(f)\in\Lip_\al(\overline{\BD},X)$.

 Furthermore,  as in the proof of \cite[Proposition \ 2.1]{san}, it can be seen that  the mapping $f\mapsto E(f)$ is a homeomorphism from $(\lam, \|\cdot\|_{\Lambda_\al(X)})$ onto    $( \Lip_A(\overline{\BD},X,\al),\|\cdot\|_\al)$. Thus we can modify  the problem of  boundedness and compactness of the weighted composition operators $W_{\Psi,\phi}: \Lip_A(\overline{\BD},X,\al)\ra\Lip_A(\overline{\BD},Y,\beta),$  to the problem of  boundedness and compactness of 
  the weighted composition operators
$W_{\psi,\var}:\lam\ra\lbm$, where $\psi=\Psi_{|\BD}$ and $\var=\phi_{|\BD}:\BD\ra\BD$.

\begin{proposition}\label{proo}
Let $\Psi\in\Lip_A(\overline{\BD}, L(X,Y),\beta)$ and $\Phi\in A(\overline{\BD})$ be a self map of $\overline{\BD}$. Then 
$W_{\Psi,\phi}: \Lip_A(\overline{\BD},X,\al)\ra\Lip_A(\overline{\BD},Y,\beta)$  is bounded (compact) if and only if 
$W_{\psi,\var}:\lam\ra\lbm$is bounded  (compact), where $\psi=\Psi_{|\BD}$ and $\var=\phi_{|\BD}:\BD\ra\BD$.
\end{proposition}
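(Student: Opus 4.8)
The plan is to realize $W_{\Psi,\phi}$ as a conjugate of $W_{\psi,\var}$ through the extension isomorphisms, so that boundedness and compactness transfer automatically. Denote by $E_X:\lam\ra\Lip_A(\overline\BD,X,\al)$ and $E_Y:\lbm\ra\Lip_A(\overline\BD,Y,\beta)$ the extension maps $f\mapsto E(f)$, which by the discussion preceding the proposition are linear homeomorphisms, in particular bounded with bounded inverses. I would first record that, since $\phi$ is a nonconstant analytic self-map of $\overline\BD$, the maximum modulus principle forces $\phi(\BD)\subseteq\BD$; hence $\var=\phi|_\BD$ is genuinely a self-map of $\BD$ and $W_{\psi,\var}$ is well defined as a linear map $\lam\ra\lbm$.

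The central step is to verify the intertwining identity
$$W_{\Psi,\phi}\circ E_X = E_Y\circ W_{\psi,\var}.$$
Fix $f\in\lam$. For $z\in\BD$ we have $\phi(z)=\var(z)\in\BD$, on which $E_X f$ coincides with $f$, and $\Psi_z=\psi(z)$; hence
$$\big(W_{\Psi,\phi}(E_X f)\big)(z)=\Psi_z\big((E_X f)(\phi(z))\big)=\psi(z)\big(f(\var(z))\big)=\big(W_{\psi,\var}f\big)(z).$$
Thus $W_{\Psi,\phi}(E_X f)$ and $W_{\psi,\var}f$ agree on $\BD$. Since $\Psi$, $E_X f$ and $\phi$ are all continuous on $\overline\BD$, the function $W_{\Psi,\phi}(E_X f)$ is a continuous extension of $W_{\psi,\var}f$ to $\overline\BD$ lying in $\Lip_A(\overline\BD,Y,\beta)$; by the uniqueness of such an extension it must equal $E_Y(W_{\psi,\var}f)$, which is the claimed identity.

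With the identity in hand the equivalence is immediate. Because $E_X$ and $E_Y$ are isomorphisms, I may write $W_{\Psi,\phi}=E_Y\circ W_{\psi,\var}\circ E_X^{-1}$ and $W_{\psi,\var}=E_Y^{-1}\circ W_{\Psi,\phi}\circ E_X$. Boundedness is preserved under composition with bounded operators, and compactness is preserved under pre- and post-composition with bounded operators; hence each of $W_{\Psi,\phi}$, $W_{\psi,\var}$ is bounded (respectively compact) if and only if the other is.

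I expect the only delicate point to be the justification that $W_{\Psi,\phi}(E_X f)$ really is the Lipschitz extension appearing in $E_Y$: this rests on the self-map property $\phi(\BD)\subseteq\BD$, so that the restriction to $\BD$ is exactly $W_{\psi,\var}f$, together with the uniqueness of the continuous (Lipschitz) extension established earlier for vector-valued Lipschitz functions. Everything else is formal manipulation with the homeomorphisms $E_X$ and $E_Y$.
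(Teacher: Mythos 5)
Your proposal is correct and follows essentially the same route as the paper: both realize $W_{\Psi,\phi}$ and $W_{\psi,\var}$ as conjugates of one another via the extension homeomorphism $E$ and the restriction map $R=E^{-1}$, and then transfer boundedness and compactness through the resulting commutative diagrams. Your verification of the intertwining identity via uniqueness of the continuous Lipschitz extension is just a repackaging of the paper's boundary-limit computation, and your remark that the maximum modulus principle guarantees $\phi(\BD)\subseteq\BD$ is a small justification the paper leaves implicit.
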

\begin{proof}
Let $R$ be the restriction map from $\Lip_A(\overline{\BD},X,\al)$ into $\lam$. Clearly, $R$ is a bounded linear operator. 
Let  $z\in\partial\BD$ and  $\{z_n\}$ be any sequence in $\BD$ converging to $z$. An easy computation  shows that for every $f\in\Lip_A(\overline{\BD},X,\al)$,
$$W_{\Psi,\phi}(f)(z)=\lim_{n\ra\infty}\psi_{z_n}f(\var(z_n).$$
On the other hand, 
\begin{align*}
(E\circ  W_{\psi,\var}\circ R)(f)(z)=\lim_{n\ra\infty} W_{\psi,\var}( R(f))(z_n)=
\lim_{n\ra\infty} \psi_{z_n}f(\var(z_n))
\end{align*}
holds for every $f\in\lam$. Thus $E\circ  W_{\psi,\var}\circ R= W_{\Psi,\phi}$ and the diagram
$$\begin{array}[c]{ccc}
 \lam&\stackrel{W_{\psi,\var}}{\longrightarrow}&\lbm\\
\uparrow\scriptstyle{R}&&\downarrow\scriptstyle{E}\\
\Lip_A(\overline{\BD},X,\al)&\stackrel{W_{\Psi,\phi}}{\longrightarrow}&\Lip_A(\overline{\BD},Y,\beta)
\end{array}$$
is   commutative.

Likewise, $W_{\psi,\var}=R\circ  W_{\Psi,\phi}\circ E$ and the diagram
$$\begin{array}[c]{ccc}
\Lip_A(\overline{\BD},X,\al)&\stackrel{W_{\Psi,\phi}}{\longrightarrow}&\Lip_A(\overline{\BD},Y,\beta)\\
\uparrow\scriptstyle{E}&&\downarrow\scriptstyle{R}\\
 \lam&\stackrel{W_{\psi,\var}}{\longrightarrow}&\lbm
\end{array}$$
is  commutative and the proof is complete.
\end{proof}

For every $x\in X$ and  every scalar-valued function $f\in\Lambda_\al$,
the function  $f_x(z)=f(z)x$,  $ z \in\BD$ belongs
to $\lam$ and $\|f_x\|_{\lam}=\|f\|_{\Lambda_\al}\|x\|$.  Moreover, 
\begin{align*}
(W_{\psi,\var}(f_x))'(z)=\var'(z)f'(\var(z))\psi_z(x)+f(\var(z))\psi_z'(x).
\end{align*}
In particular, for  each $x\in X$  the constant function $1_x$
 exists in $\lam$ and the Banach space $X$ can be considered as a subspace
of $\lam$. 

For every vector-valued function  $f\in H(\BD, X)$ and every  $z\in\BD$ we have
\begin{align*}
(W_{\psi,\var}(f))'(z)=\var'(z)\psi_z(f'(\var(z)))+\psi_z'(f(\var(z))).
\end{align*}
Hence,  $DW_{\psi,\var}=W_{\var '\psi,\var}\circ D+W_{\psi ',\var}$, where $D$ is the derivation operator. By \cite[Theorem\ 2.1]{laitila},
for $0<\al<1$ and $0<\beta\leq 1$ we have 
\begin{align}\label{v}\|W_{\psi,\var}:H_{\nu_{\al}}^\infty(X)\rightarrow H_{\nu_{\beta}}^\infty(Y)\|\asymp \sup_{z\in\BD}\frac{(1-|z|^2)^{\beta}}{(1-|\var(z)|^2)^{\al}}\|\psi_z\|,
\end{align}
where $\nu_\al$ is the standard weight $\nu_\al(z)=(1-|z|^2)^\al$.

By the same method, one can see that $W_{\psi,\var}:H^\infty(X)\rightarrow H_{\nu_{\beta}}^\infty (Y)$ is bounded if and only if $\psi\in  H_{\nu_{\beta}}^\infty (L(X,Y))$. Moreover, $\|W_{\psi,\var}:H^\infty(X)\rightarrow H_{\nu_{\beta}}^\infty (Y)\|\asymp \|\psi\|_{  H_{\nu_{\beta}}^\infty}$.

The following  theorem characterizes the  bounded weighted composition operators between analytic vector-valued Lipschitz spaces. 
\begin{theorem} \label{bounded}
For $0<\al< 1$  the operator $W_{\psi,\var}$ maps $\lam$ boundedly into $\lbm$ if and only if $\psi\in \Lambda_\beta(L(X,Y))$ and
\begin{align*} 
q_{\al,\beta}=\sup_{z\in\BD}\frac{(1-|z|^2)^{1-\beta}}{(1-|\var(z)|^2)^{1-\al}}|\var'(z)|\|\psi_z\|<\infty.
\end{align*}
Moreover, 
\begin{align}\label{bndd}
\|W_{\psi,\var}:\lam\rightarrow\lbm\|\asymp\max\{q_{\al,\beta},\|\psi\|_{\Lambda_\beta(L(X,Y))}\}.
\end{align}
\end{theorem}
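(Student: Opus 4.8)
The plan is to work throughout with the equivalent Bloch-type norms supplied by Proposition \ref{pro}, so that $\|f\|_{\lam}\asymp\|f(0)\|+\sup_{z\in\BD}(1-|z|^2)^{1-\al}\|f'(z)\|$ and likewise for $\lbm$ and for $\Lambda_\beta(L(X,Y))$. Two structural facts drive everything. First, since $0<\al<1$ we have $1-\al<1$, and the standard growth estimate for $B_{1-\al}$ gives a continuous inclusion $\lam\hookrightarrow H^\infty(X)$, i.e. $\|f\|_\infty\le C\|f\|_{\lam}$; this lets me treat the lower-order term below as a multiplier term on $H^\infty$. Second, the identity $DW_{\psi,\var}=W_{\var'\psi,\var}\circ D+W_{\psi',\var}$ already recorded above splits the derivative of $W_{\psi,\var}f$ into a piece carrying $f'$ and a piece carrying $f$. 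The key observation tying this to $q_{\al,\beta}$ is that applying \eqref{v} with $\al$ replaced by $1-\al$, with $\beta$ replaced by $1-\beta$, and with weight $\var'\psi$ yields $\|W_{\var'\psi,\var}:H^\infty_{\nu_{1-\al}}(X)\to H^\infty_{\nu_{1-\beta}}(Y)\|\asymp q_{\al,\beta}$, while the remark following \eqref{v} gives $\|W_{\psi',\var}:H^\infty(X)\to H^\infty_{\nu_{1-\beta}}(Y)\|\asymp\sup_{z}(1-|z|^2)^{1-\beta}\|\psi'_z\|$, which is exactly the Bloch seminorm part of $\|\psi\|_{\Lambda_\beta(L(X,Y))}$.

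For sufficiency I would estimate $\|W_{\psi,\var}f\|_{\lbm}$ directly in the Bloch-type norm. The value at $0$ is $\|\psi_0(f(\var(0)))\|\le\|\psi_0\|\,\|f\|_\infty\le C\|\psi\|_{\Lambda_\beta(L(X,Y))}\|f\|_{\lam}$. For the seminorm I write $(1-|z|^2)^{1-\beta}\|(W_{\psi,\var}f)'(z)\|$ and bound the two terms of the derivative identity separately: the first is controlled, via the $H^\infty_\nu$ computation above, by $q_{\al,\beta}\sup_z(1-|z|^2)^{1-\al}\|f'(z)\|\le q_{\al,\beta}\|f\|_{\lam}$, and the second by $\big(\sup_z(1-|z|^2)^{1-\beta}\|\psi'_z\|\big)\|f\|_\infty\le C\|\psi\|_{\Lambda_\beta(L(X,Y))}\|f\|_{\lam}$, using the embedding into $H^\infty(X)$. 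Adding the two gives boundedness together with the upper estimate $\|W_{\psi,\var}\|\le C\max\{q_{\al,\beta},\|\psi\|_{\Lambda_\beta(L(X,Y))}\}$.

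For necessity, assume $W_{\psi,\var}$ is bounded. To recover $\psi\in\Lambda_\beta(L(X,Y))$ I test on constants: for $x\in X$ the constant $1_x$ lies in $\lam$ with $\|1_x\|_{\lam}=\|x\|$ and $W_{\psi,\var}(1_x)(z)=\psi_z(x)$, so $z\mapsto\psi_z(x)$ lies in $\lbm$ with norm at most $\|W_{\psi,\var}\|\|x\|$; reading off the two pieces of its Bloch norm and taking the supremum over $\|x\|\le1$ yields $\|\psi_0\|+\sup_z(1-|z|^2)^{1-\beta}\|\psi'_z\|\le C\|W_{\psi,\var}\|$, i.e. $\psi\in\Lambda_\beta(L(X,Y))$ with norm $\le C\|W_{\psi,\var}\|$. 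To recover finiteness of $q_{\al,\beta}$ I fix $z_0\in\BD$, set $w=\var(z_0)$, and choose a unit vector $x_{z_0}\in X$ with $\|\psi_{z_0}(x_{z_0})\|\ge\tfrac12\|\psi_{z_0}\|$. For $|w|\ge\tfrac12$ I use the vector test function $f=g_w\,x_{z_0}$ with $g_w(z)=(1-|w|^2)^{1+\al}/(1-\bar wz)$, which has $\|g_w\|_{\lam}\le C$, $|g_w(w)|\le1$, and $|g_w'(w)|\asymp(1-|w|^2)^{-(1-\al)}$. Evaluating the derivative identity at $z_0$ and using the reverse triangle inequality isolates the leading term; multiplying by $(1-|z_0|^2)^{1-\beta}$, bounding the left side by $C\|W_{\psi,\var}\|$ and absorbing the $\psi'$-term into $C\|W_{\psi,\var}\|$ via the already-proved bound on $\|\psi\|_{\Lambda_\beta(L(X,Y))}$, gives the summand of $q_{\al,\beta}$ at $z_0$ bounded by $C\|W_{\psi,\var}\|$. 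For $|w|<\tfrac12$, where $(1-|w|^2)^{1-\al}$ is bounded below, I instead use $f=(z-w)x_{z_0}$, which kills the $\psi'$-term since $f(w)=0$ and again yields the same bound. Taking the supremum over $z_0$ shows $q_{\al,\beta}\le C\|W_{\psi,\var}\|$, and together with the sufficiency estimate this proves \eqref{bndd}.

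The main obstacle is the lower bound for $q_{\al,\beta}$ in the necessity direction: one must produce a family of test functions that is uniformly bounded in $\lam$ yet makes $(1-|z_0|^2)^{1-\beta}\|\var'(z_0)\psi_{z_0}(f'(w))\|$ as large as the desired summand, and one must do so for operator-valued $\psi$, which forces the direction-selecting choice of $x_{z_0}$ so that $\|\psi_{z_0}(f'(w))\|\asymp\|\psi_{z_0}\|\,\|f'(w)\|$. The secondary difficulty is disentangling the two terms of the derivative identity: the $\psi'$-term cannot be made to vanish for $|w|\ge\tfrac12$, so its contribution must be controlled a posteriori by the independently established membership $\psi\in\Lambda_\beta(L(X,Y))$.
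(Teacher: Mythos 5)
Your argument is correct, and its skeleton coincides with the paper's: the same identity $DW_{\psi,\var}=W_{\var'\psi,\var}\circ D+W_{\psi',\var}$ combined with the norm estimate \eqref{v} gives the upper bound, and testing on the constants $1_x$ recovers $\|\psi\|_{\Lambda_\beta(L(X,Y))}\leq C\|W_{\psi,\var}\|$. The one genuine divergence is the necessity argument for $q_{\al,\beta}$. The paper uses the family
$f_{a,x}(z)=\frac{1}{\overline a}\bigl(\frac{1-|a|^2}{(1-\overline a z)^{1-\al}}-(1-\overline a z)^{\al}\bigr)x$,
which is uniformly bounded in $\lam$ for \emph{all} $a\neq0$ and satisfies $f_{a,x}(a)=0$, $f_{a,x}'(a)=x(1-|a|^2)^{-(1-\al)}$; the vanishing at $a$ makes the $\psi'$-term of the derivative identity disappear identically, so no absorption is needed and no case split on $|\var(z_0)|$ arises. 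Your functions $g_w(z)=(1-|w|^2)^{1+\al}(1-\overline w z)^{-1}$ do the same job but do not vanish at $w$ and have $|g_w'(w)|=|w|(1-|w|^2)^{-(1-\al)}$ degenerating as $w\to0$, which is exactly why you need both the a posteriori absorption of the $\psi'$-term via the already-proved bound on $\|\psi\|_{\Lambda_\beta(L(X,Y))}$ and the separate treatment of $|\var(z_0)|<\tfrac12$ with $(z-w)x$; both steps are legitimate, just less economical. On the other side of the ledger, your sufficiency direction is written out in full (via the embedding $\lam\hookrightarrow H^\infty(X)$ and the remark following \eqref{v} for the $\psi'$-piece), whereas the paper delegates the characterization itself to a ``straightforward modification'' of Ohno--Stroethoff--Zhao and only details the norm equivalence; your version is therefore more self-contained, and the direction-selecting vectors $x_{z_0}$ with $\|\psi_{z_0}(x_{z_0})\|\geq\tfrac12\|\psi_{z_0}\|$ correctly handle the operator-valued symbol.
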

\begin{proof}
The proof of the first part is a straightforward modification of that of \cite[Theorem\ 2.1]{ohno}. We prove that in the case  $W_{\psi,\var}$ is bounded, the relation (\ref{bndd}) holds. Let $W_{\psi,\var}:\lam\rightarrow\lbm$ be bounded. Using
  $DW_{\psi,\var}=W_{\var '\psi,\var}\circ D+W_{\psi ',\var}$, one can easily show that  the  operators
$W_{\var '\psi,\var}: H_{\nu_{1-\al}}^\infty(X)\rightarrow H_{\nu_{1-\beta}}^\infty(Y)$ and $W_{\psi ',\var}:\lam\rightarrow  H_{\nu_{1-\beta}}^\infty(Y)$ are bounded  and
$$\|W_{\psi ',\var}:\lam\rightarrow  H_{\nu_{1-\beta}}^\infty(Y)\|\leq\frac 2{\al}\|\psi\|_{\Lambda_\beta(L(X,Y))}.$$
Therefore,
\begin{align*}
\|W_{\psi,\var}:\lam\rightarrow\lbm\|&\leq\|W_{\var '\psi,\var}: H_{\nu_{1-\al}}^\infty(X)\rightarrow H_{\nu_{1-\beta}}^\infty(Y)\|\\
&+\|W_{\psi ',\var}:\lam\rightarrow  H_{\nu_{1-\beta}}^\infty(Y)\|.
\end{align*}
From relation (\ref{v}),  for some positive constant $C$ we have  
 $$\|W_{\psi,\var}:\lam\rightarrow\lbm\|\leq C\max\{q_{\al,\beta},\|\psi\|_{\Lambda_\beta(L(X,Y))}\}.$$

For the converse,  since
\begin{align*}
(1-|z|^2)^{1-\beta}\|\psi _z'\|&=\sup_{\|x\|\leq1}(1-|z|^2)^{1-\beta}\|\psi _z '(x)\|\\
&=\sup_{\|x\|\leq1}(1-|z|^2)^{1-\beta}\|(W_{\psi,\var}(1_x))'(z)\|
\end{align*}
holds for every $z\in\BD$ and since $W_{\psi,\var}$ is bounded,
we have  $$\sup_{z\in\BD}(1-|z|^2)^{1-\beta}\|\psi _z'\|\leq\|W_{\psi,\var}\|.$$

For every nonzero  $a\in\BD$ and every $x\in X$ define
$$f_{a,x}(z)=\frac{1}{\overline{a}}\left(\frac{1-|a|^2}{(1-\overline{a}z)^{1-\al}}-(1-\overline{a}z)^{\al}\right)x.$$
 It could be shown that $\{f_{a,x}: a\neq 0, \|x\|\leq1\}$ is a bounded subset of $\lam$. Moreover,  $f_{a,x}(a)=0$ and $f_a'(a)=\frac{x}{(1-|a|^2){1-\al}}$.
Then for some positive constant $C$ we have
$$q_{\al,\beta}=\sup_{\|x\|\leq1}\sup_{z\in\BD}(1-|z|^2)^{1-\beta}\|(W_{\psi,\var}(f_{\var(z),x}))'(z)\|\leq C\|W_{\psi,\var}\|,$$
which completes the proof.
\end{proof}
\begin{theorem}
The operator $W_{\psi,\var}:\Lambda_1(X)\rightarrow\lbm$ is well defined and  bounded if and only if $\psi\in \Lambda_\beta(L(X,Y))$ and
$\var'\psi\in H^\infty_{\nu_{1-\beta}}(L(X,Y))$.
Furthermore,
\begin{align*}
\|W_{\psi,\var}:\Lambda_1(X)\rightarrow\lbm\|\asymp\max\{\|\var'\psi\|_{H^\infty_{\nu_{1-\beta}}},\|\psi\|_{\Lambda_\beta}\}.
\end{align*}
\end{theorem}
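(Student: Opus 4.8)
The plan is to run the scheme of Theorem \ref{bounded} but with the source Bloch-type space replaced by $H^\infty(X)$, since at the endpoint $\al=1$ the exponent $1-\al$ degenerates to $0$: the factor $(1-|\var(z)|^2)^{1-\al}$ becomes $1$, so the quantity $q_{\al,\beta}$ collapses to exactly $\sup_{z\in\BD}(1-|z|^2)^{1-\beta}|\var'(z)|\|\psi_z\|=\|\var'\psi\|_{H^\infty_{\nu_{1-\beta}}}$, which is why the condition takes the stated form. Throughout I use that on $\Lambda_1(X)$ the Lipschitz norm is equivalent to $\|f(0)\|+\sup_{z\in\BD}\|f'(z)\|$ (the Hardy--Littlewood endpoint: an analytic $f$ is Lipschitz-$1$ iff $f'$ is bounded, with $p_1(f)=\|f'\|_{\BD}$; since Proposition \ref{pro} was stated only for $\al\in(0,1)$, this endpoint needs the elementary direct argument) together with the equivalent norm $\|g(0)\|+\sup_z(1-|z|^2)^{1-\beta}\|g'(z)\|$ on $\Lambda_\beta(Y)$. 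In particular $D:\Lambda_1(X)\to H^\infty(X)=H^\infty_{\nu_0}(X)$ is bounded with $\|Df\|_{\BD}\le\|f\|_{\Lambda_1(X)}$, and $\|f\|_{\BD}\le\|f\|_{\Lambda_1(X)}$.

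For sufficiency and the upper estimate I would feed the identity $DW_{\psi,\var}=W_{\var'\psi,\var}\circ D+W_{\psi',\var}$ into the equivalent norm of $\Lambda_\beta(Y)$. The value term $\|W_{\psi,\var}(f)(0)\|=\|\psi_0(f(\var(0)))\|$ is dominated by $\|\psi\|_{\Lambda_\beta}\|f\|_{\Lambda_1(X)}$. For the derivative term, the remark following (\ref{v}) (the $H^\infty$-to-$H^\infty_\nu$ version, applied with weight $\nu_{1-\beta}$ and symbol $\var'\psi$) gives $\|W_{\var'\psi,\var}:H^\infty(X)\to H^\infty_{\nu_{1-\beta}}(Y)\|\asymp\|\var'\psi\|_{H^\infty_{\nu_{1-\beta}}}$, which composed with $D$ handles the first summand; the second is controlled directly by $\sup_z(1-|z|^2)^{1-\beta}\|\psi_z'\|\,\|f\|_{\BD}\lesssim\|\psi\|_{\Lambda_\beta}\|f\|_{\Lambda_1(X)}$. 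Summing yields $\|W_{\psi,\var}:\Lambda_1(X)\to\lbm\|\lesssim\max\{\|\var'\psi\|_{H^\infty_{\nu_{1-\beta}}},\|\psi\|_{\Lambda_\beta}\}$, which in particular shows the operator is well defined.

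For necessity and the lower estimate I would test on two families. First, the constant functions $1_x$ ($\|x\|\le1$) lie in $\Lambda_1(X)$ with $\|1_x\|_{\Lambda_1(X)}=\|x\|$ and satisfy $W_{\psi,\var}(1_x)(z)=\psi_z(x)$; reading off the $\Lambda_\beta$ norm of the image and taking the supremum over $\|x\|\le1$ bounds both $\|\psi_0\|$ and $\sup_z(1-|z|^2)^{1-\beta}\|\psi_z'\|$, hence $\|\psi\|_{\Lambda_\beta}\lesssim\|W_{\psi,\var}\|$. Second --- and this is where $\al=1$ is genuinely simpler than Theorem \ref{bounded} --- I would replace the fractional functions $f_{a,x}$ by the affine functions $g_{w,x}(\zeta)=(\zeta-w)x$. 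Fixing $z_0$ and setting $w=\var(z_0)$, these satisfy $\|g_{w,x}\|_{\Lambda_1(X)}\le2\|x\|$, $g_{w,x}(w)=0$, and $g_{w,x}'(w)=x$, so that the second term of the product rule vanishes and $(W_{\psi,\var}g_{w,x})'(z_0)=\var'(z_0)\psi_{z_0}(x)$. Evaluating the $\Lambda_\beta$ norm at $z_0$ and taking suprema over $\|x\|\le1$ and over $z_0$ gives $\|\var'\psi\|_{H^\infty_{\nu_{1-\beta}}}\lesssim\|W_{\psi,\var}\|$, completing the norm equivalence.

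I expect no real analytic difficulty: the main obstacle is purely the endpoint bookkeeping, namely verifying the equivalent derivative norm on $\Lambda_1(X)$ by hand (Proposition \ref{pro} not applying at $\al=1$) and substituting the $H^\infty$-source estimate for formula (\ref{v}). Once these substitutions are in place the argument parallels Theorem \ref{bounded} line for line, and the only genuinely new observation is that at the endpoint the peaking test functions may be taken affine.
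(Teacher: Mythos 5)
Your proposal is correct and follows essentially the same route as the paper's proof: the endpoint norm equivalence $\|f\|_1\asymp\|f\|_{\BD}+\|f'\|_{\BD}$, the affine test functions $f_{a,x}(z)=(z-a)x$ with $a=\var(z)$ to extract $\|\var'\psi\|_{H^\infty_{\nu_{1-\beta}}}\lesssim\|W_{\psi,\var}\|$, and the decomposition $DW_{\psi,\var}=W_{\var'\psi,\var}\circ D+W_{\psi',\var}$ combined with the $H^\infty(X)\to H^\infty_{\nu_{1-\beta}}(Y)$ estimate for the upper bound. The only difference is that you spell out the steps the paper dispatches with ``the rest of the proof runs as the proof of Theorem \ref{bounded}.''
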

\begin{proof}
A simple  computation  gives that $f\in\Lambda_1(X)$ if and only if $f'\in H^\infty(X)$  and $\|f\|_1\asymp \|f\|_{\BD}+ \|f'\|_{\BD}$.

Let $W_{\psi,\var}$ be bounded.  Defining  $f_{a,x}(z)=(z-a)x$, for any $a, z\in\BD$ and $x\in X$, one can see that
$\{f_{a,x}: a\in\BD,\ \ \|x\|\leq1\}$ is a bounded sequence in $\Lambda_1(X)$. Since $W_{\psi,\var}$ is bounded,  for every $z\in\BD$,
\begin{align*}
(1-|z|^2)^{1-\beta}|\var'(z)|\|\psi_z\|&=\sup_{\|x\|\leq 1}(1-|z|^2)^{1-\beta}|\var'(z)|\|\psi_zx\|\\&=
\sup_{\|x\|\leq 1}(1-|z|^2)^{1-\beta}\|(W_{\psi,\var}(f_{\var(z),x}))'(z)\|\\
&\leq \sup_{\|x\|\leq 1}\|W_{\psi,\var}\|\|f_{\var(z),x}\|_{\Lambda_1(X)}<3\|W_{\psi,\var}\|.
\end{align*}
 Thus  $\var'\psi\in H^\infty_{\nu_{1-\beta}}(L(X,Y))$.
The rest of the proof runs as the proof of
 Theorem \ref{bounded}.
\end{proof}


The next lemma shows that the compact open topology on $\Lambda_\al^0(X)$ is stronger than the weak topology.
\begin{lemma}\label{co}
Let $\al\in(0,1)$ and $\{f_n\}$ be a bounded sequence in $\lamo$ converging to zero uniformly on compact subsets of $\BD$. Then
$\{f_n\}$  converges  weakly to zero.
\end{lemma}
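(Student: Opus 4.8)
The plan is to show that every bounded functional $\Phi\in(\lamo)^*$ satisfies $\Phi(f_n)\to 0$; since the $f_n$ are uniformly bounded, this is exactly weak convergence to zero. The first, and conceptually decisive, step would be to upgrade the hypothesis. Because $\{f_n\}$ is bounded in $\lamo$, Proposition~\ref{pro} together with the extension homeomorphism $f\mapsto E(f)$ yields a uniform H\"older bound $\|f_n(z)-f_n(w)\|\le C\,|z-w|^{\al}$ on $\ovl$, so the family is equicontinuous there. Equicontinuity on the compact set $\ovl$, combined with pointwise convergence to zero on the dense subset $\BD$ (which is what uniform convergence on compacta supplies), forces $f_n\to 0$ uniformly on all of $\ovl$; in particular $f_n(z)\to 0$ at every boundary point as well. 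Consequently the difference quotients $g_n(z,w):=(f_n(z)-f_n(w))/|z-w|^{\al}$ converge to $0$ at each fixed pair $z\neq w$ while remaining bounded in norm by $C$.

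Next I would realize $\lamo$ concretely as a space of continuous functions. After identifying $\lamo$ with $\lip_A(\ovl,X,\al)$ via the extension map, set $\Omega=\{(z,w)\in\ovl\times\ovl:z\neq w\}$; the little-oh condition says precisely that $g_f(z,w)=(f(z)-f(w))/|z-w|^{\al}$ extends to an element of $C_0(\Omega,X)$, since it vanishes as $(z,w)$ approaches the diagonal, which is the point at infinity of the locally compact space $\Omega$. Hence $f\mapsto(f,g_f)$ embeds $\lamo$ (with its Lipschitz norm) isometrically into $C(\ovl,X)\oplus C_0(\Omega,X)$. By the Hahn--Banach theorem I would extend $\Phi$ to this larger space and invoke the Riesz representation theorem for vector-valued continuous functions (Dinculeanu): the extension is given by integration against a pair of regular $X^*$-valued Borel measures of bounded variation.

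Finally, the conclusion follows from dominated convergence. The integrands $f_n$ and $g_n$ are uniformly bounded and, by the first paragraph, converge to $0$ pointwise on $\ovl$ and on $\Omega$ respectively; integrating against the finite total-variation measures and passing to the limit gives $\Phi(f_n)\to 0$. To avoid Radon--Nikodym and approximation-property subtleties for a general Banach space $X$, I would represent each extended functional by a finite positive control measure together with a weak$^*$-measurable $X^*$-valued density, which is available for every $X$, and then apply the scalar dominated convergence theorem to the paired integrand. I expect the main obstacle to be exactly this dual representation in the vector-valued setting; alternatively one could stay within the framework of co-continuous functionals from \cite{bonet}, identifying $(\lamo)^*$ with the predual of $\lam$, but in either route one must use in an essential way that $\Phi$ acts on the \emph{little} space $\lamo$, since the corresponding statement fails on $\lam$ itself.
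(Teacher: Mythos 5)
Your proof is correct, but it takes a genuinely different route from the paper's. The paper works entirely inside the disc: using the Bloch-type description of $\lamo$ coming from Proposition~\ref{pro}, it maps $f$ to $\tilde f(z)=(1-|z|^2)^{1-\al}f'(z)$, so that $\lamo$ sits inside $C_0(\BD,X)$, represents a functional as $Tf=\int_{\BD}\tilde f\,d\mu$ with $\mu\in M(\BD,X^*)$ via Hahn--Banach and Dinculeanu, and then splits the integral over a large closed disc $D_m$ (where $\tilde f_n\to 0$ uniformly, since uniform convergence of $f_n$ on compacta passes to $f_n'$ by Cauchy's formula) and its complement (where $|\mu|$ is small). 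You instead use the de Leeuw-type embedding $f\mapsto (f,g_f)$ of the little Lipschitz space into $C(\ovl,X)\oplus C_0(\Omega,X)$ via difference quotients, upgrade the hypothesis to convergence on all of $\ovl$ by equicontinuity, and finish by dominated convergence against the representing measures. The two proofs share the same skeleton (embed into a $C_0$-type space, Hahn--Banach, vector-measure representation, dominated convergence versus an explicit splitting), but yours is more elementary and more general: it uses neither analyticity nor the Hardy--Littlewood equivalence, so it would prove the statement for any little Lipschitz space; the paper's version is shorter because the one-variable $C_0(\BD,X)$ picture makes the cut-off argument immediate. Two points you should make explicit if you write this up: you need that the extension $E(f)$ of $f\in\lamo$ is \emph{little} Lipschitz on $\ovl$, so that $g_f$ genuinely lies in $C_0(\Omega,X)$ (this follows by passing the little-$o$ modulus estimate to the limit); and the final step reduces to the scalar dominated convergence theorem via the domination $\bigl|\int h\,d\mu\bigr|\le\int\|h\|\,d|\mu|$ with $|\mu|$ a finite positive measure, so the weak$^*$-density machinery you worry about is not actually needed.
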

\begin{proof}
For every $f\in\lamo$, consider the function 
 $\tilde f(z)=(1-|z|^2)^{1-\al} f'(z)$ and set $\tilde\Lambda=\{\tilde f: f\in\lamo\}$. Clearly, $\tilde\Lambda$
 is a subspace of $C_o(\BD,X)$. Let $T$ be a bounded linear functional on $\lamo$. By Hahn- Banach theorem,  for some measure $\mu\in M(\BD,X^*)$  we have
$
Tf=\int_{\BD}\tilde fd\mu,
$
  for every $f\in\lamo$ (see \cite[Corollary\ 2, p. 387]{din}). Without loss of generality we  assume that
 $\|f_n\|_{\lam}\leq1$. 
Fixing $\epsilon>0$, let $\{r_m\}$ be an increasing sequence in $(0,1)$ converging to $1$ and
$D_m=\{z\in\BD:|z|\leq r_m\}$. Then $\BD=\cup_{m=1}^{\infty}D_m$ and  $|\mu|(\BD\setminus D_m)<\frac{\epsilon}{2}$ for some $m$.
Since $\{f_n\}$ converges to zero uniformly on compact subsets of $\BD$,  we deduce that
 $\sup_{z\in D_m}\|\tilde f_n(z)\|<\frac{\epsilon}{2\|\mu\|}$ for  $n$ sufficiently large. Therefore
\begin{align*}
|T (f_n)|&\leq|\int_{\BD\setminus D_m}\tilde f_nd\mu |+|\int_{ D_m}\tilde f_nd\mu |\\
&\leq\int_{\BD\setminus D_m}\|\tilde f_n(z)\|d|\mu|(z)+\int_{ D_m}\|\tilde f_n(z)\|d|\mu|(z)\\
&\leq|\mu|(\BD\setminus D_m)+|\mu|( D_m)\frac{\epsilon}{2\|\mu\|}<\epsilon,
\end{align*}
which shows that  $\lim_{n\ra\infty}T(f_n)=0$. Thus $\{f_n\}$ converges  weakly to zero as desired.
\end{proof}

In the next theorem we provide a necessary and sufficient condition for 
 weighted composition  operator $W_{\psi,\var}:\lam\rightarrow\lbm$ to be compact, whenever $\al\in(0,1)$.  We use the idea of \cite{laitila} and
define  $T_\psi:X\ra\lbm$, by  $T_\psi(x)(z)=\psi_z(x)$.  In the case  $W_{\psi,\var}$  is bounded,  $T_\psi$ is a
bounded linear operator   and 
$$\|T_\psi:X\ra\lbm\|\leq\|W_{\psi,\var}:\lam\ra\lbm\|.$$
For $n\in\BN$, we define $L_n:\lam\rightarrow\lbm$ by $L_n(f)=\sum_{k=0}^n\frac{f^{(k})(0)}{k!}z^k$, for every $f\in\lam$,
where $\sum_{k=0}^\infty\frac{f^{(k)}(0)}{k!}z^k$ is the Taylor expansion of $f$. One can easily show that for every $f\in\lamo$, 
$\|L_nf-f\|_{\lam}\ra 0$ as $n\ra\infty$.

For $r\in(0,1)$ we define the linear operator $K_r:\lam\ra\lam$, $K_r(f)(z)=f(rz)$ for every $f\in\lam$ and $z\in\BD$. Clearly, $K_r$ is a bounded linear operator from
$\lam$ into $\lamo$ and  for every $f\in\lam$, $\|K_r f-f\|_{\lam\ra 0}$ as $r\ra 1^-$.

\begin{theorem}\label{com}
 Let  $0<\al < 1$  and  $W_{\psi,\var}:\lam\ra\lbm$ be a bounded weighted composition operator.   Then $W_{\psi,\var}$ is compact if and only if  $T_\psi$ is compact  and
 \begin{align}
\limsup_{|\var(z)|\rightarrow1}\frac{(1-|z|^2)^{1-\beta}}{(1-|\var(z)|^2)^{1-\al}}|\var'(z)|\|\psi_z\| = 0.\label{i}
\end{align}
\end{theorem}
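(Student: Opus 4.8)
The plan is to prove necessity directly, and to obtain sufficiency by factoring $W_{\psi,\var}$ through the derivation operator and treating the resulting two pieces separately, using the weighted $H^\infty$ estimate (\ref{v}) and the compactness of $T_\psi$.

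For necessity, let $J\colon X\ra\lam$ be the isometric embedding $Jx=1_x$. Since $W_{\psi,\var}(1_x)(z)=\psi_z(x)=T_\psi(x)(z)$, we have $T_\psi=W_{\psi,\var}\circ J$, so $T_\psi$ is compact as the composition of a bounded map with a compact one. For (\ref{i}) I would use that a compact operator sends a bounded sequence tending to $0$ uniformly on compacta to a norm-null sequence: if $\|f_n\|\le1$ and $f_n\ra0$ uniformly on compacta then $W_{\psi,\var}f_n\ra0$ uniformly on compacta, and relative compactness of $\{W_{\psi,\var}f_n\}$ forces $\|W_{\psi,\var}f_n\|\ra0$. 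Applying this to the admissible test functions $f_n(z)=(1-|a_n|^2)(1-\overline{a_n}z)^{-(1-\al)}x_n$, with $a_n=\var(z_n)$, $|a_n|\ra1$, and $\|x_n\|\le1$ chosen so that $\|\psi_{z_n}x_n\|\ge\tfrac12\|\psi_{z_n}\|$, these are bounded in $\lam$ and tend to $0$ uniformly on compacta; evaluating $(W_{\psi,\var}f_n)'$ at $z_n$, the $\psi'$-term is $O((1-|a_n|^2)^{\al})\ra0$ while the surviving term is comparable to the quantity in (\ref{i}), which must therefore vanish in the limit.

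For sufficiency, observe first that $\psi_z=\delta_z\circ T_\psi$ is compact for every $z$ (the evaluation $\delta_z\colon\lbm\ra Y$ is bounded), so the map $f\mapsto(W_{\psi,\var}f)(0)=\psi_0(f(\var(0)))$ is compact. Since $\|g\|_{\lbm}=\|g(0)\|+\|Dg\|_{H^\infty_{\nu_{1-\beta}}}$, the assignment $g\mapsto(g(0),Dg)$ embeds $\lbm$ isometrically into $Y\oplus H^\infty_{\nu_{1-\beta}}(Y)$, so $W_{\psi,\var}$ is compact if and only if $DW_{\psi,\var}\colon\lam\ra H^\infty_{\nu_{1-\beta}}(Y)$ is compact. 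I would then use $DW_{\psi,\var}=W_{\var'\psi,\var}\circ D+W_{\psi',\var}$ and show each summand is compact. The multiplier part $W_{\psi',\var}$ is handled by approximating $T_\psi$ in operator norm by a finite-rank map $F=\sum_{i=1}^N x_i^*\otimes g_i$: the induced operator $f\mapsto\sum_i (Dg_i)\,\big((x_i^*\circ f)\circ\var\big)$ approximates $W_{\psi',\var}$ in norm, and is itself compact, because each scalar sequence $x_i^*\circ f_n$ is bounded in $\Lambda_\al$, hence (scalar Montel) has a subsequence converging uniformly on compacta for which $\|x_i^*\circ f_n-h_i\|_{\ovl}\ra0$ by the uniform $\al$-Hölder bound; multiplying by the fixed $Dg_i\in H^\infty_{\nu_{1-\beta}}(Y)$ and summing gives convergence in $H^\infty_{\nu_{1-\beta}}(Y)$. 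Thus $W_{\psi',\var}$, a norm-limit of compacts, is compact.

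The main obstacle is the composition part $W_{\var'\psi,\var}\circ D$. As $D\colon\lam\ra H^\infty_{\nu_{1-\al}}(X)$ is bounded, it suffices that $W_{\var'\psi,\var}\colon H^\infty_{\nu_{1-\al}}(X)\ra H^\infty_{\nu_{1-\beta}}(Y)$ be compact. By (\ref{v}) the relevant symbol is $\frac{(1-|z|^2)^{1-\beta}}{(1-|\var(z)|^2)^{1-\al}}|\var'(z)|\|\psi_z\|$, whose boundary little-oh behaviour is \emph{exactly} condition (\ref{i}); together with the fact that the operator values $\var'(z)\psi_z$ are compact (again from compactness of $T_\psi$), the vector-valued weighted-$H^\infty$ compactness theory in the spirit of \cite{laitila} yields compactness of $W_{\var'\psi,\var}$. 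The delicate point—and the reason one cannot simply reuse the finite-rank approximation of $T_\psi$ as in the multiplier part—is that this piece carries the factor $\var'$, and $(1-|z|^2)^{1-\beta}|\var'(z)|$ is unbounded on $\BD$; one must keep $\var'\psi$ together so that the bounded/little-oh symbol supplies the compensation near the boundary. Granting compactness of both summands, $DW_{\psi,\var}$ is compact, and hence so is $W_{\psi,\var}$.
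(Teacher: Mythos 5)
Your necessity argument is essentially correct and parallels the paper's: $T_\psi=W_{\psi,\var}\circ J$ gives compactness of $T_\psi$, and condition (\ref{i}) is extracted from a bounded, co-null family of test functions. (The paper uses test functions that vanish at $\var(z_n)$ so the $\psi'$-term drops out exactly, and justifies ``bounded $+$ co-null $\Rightarrow$ weakly null'' via a predual/measure argument, Lemma \ref{co}; your test functions $f_n(z)=(1-|a_n|^2)(1-\overline{a_n}z)^{\al-1}x_n$ leave a $\psi'$-term of size $O((1-|a_n|^2)^{\al})$, which is indeed harmless since boundedness of $W_{\psi,\var}$ forces $\sup_z(1-|z|^2)^{1-\beta}\|\psi_z'\|<\infty$. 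This part is fine.) The multiplier piece $W_{\psi',\var}$ of your sufficiency argument is also salvageable: finite-rank approximation of $T_\psi$ does transfer to $W_{\psi',\var}$ because the $\lbm$-norm of $T_\psi x$ controls $\sup_z(1-|z|^2)^{1-\beta}\|\psi_z'(x)\|$.

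The genuine gap is the composition piece $W_{\var'\psi,\var}\circ D$. Relation (\ref{v}) is only a two-sided \emph{norm} estimate; it identifies the symbol but says nothing about compactness. In the vector-valued setting, the little-oh boundary condition (\ref{i}) together with pointwise compactness of each operator $\var'(z)\psi_z$ is \emph{not} known (and should not be expected) to imply compactness of $W_{\var'\psi,\var}\colon H^\infty_{\nu_{1-\al}}(X)\ra H^\infty_{\nu_{1-\beta}}(Y)$: the available characterizations in the spirit of \cite{laitila} require, in addition to the little-oh condition, a \emph{collective} compactness hypothesis, namely that for each $s<1$ the set $\bigcup_{|\var(z)|\le s}(1-|z|^2)^{1-\beta}\var'(z)\psi_z(B_X)$ be relatively compact in $Y$. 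You never verify such a condition, and it does not follow formally from compactness of $T_\psi\colon X\ra\lbm$, since the $\lbm$-norm of $T_\psi x$ controls $\psi_0(x)$ and $(1-|z|^2)^{1-\beta}\psi_z'(x)$ but not the weighted family $\var'(z)\psi_z$. So the sentence ``the vector-valued weighted-$H^\infty$ compactness theory \ldots yields compactness'' is precisely the missing proof, and the paragraph you label ``the delicate point'' acknowledges the obstruction without removing it. The paper avoids this decomposition altogether: it factors $W_{\psi,\var}\circ L_n=\sum_{k=0}^{n}M_{\var^k}\circ T_\psi\circ q_k$ through the Taylor-coefficient functionals $q_k$ (so compactness of $T_\psi$ is used \emph{before} differentiating), deduces that $W_{\psi,\var}\circ K_r$ is compact for the dilations $K_rf(z)=f(rz)$, and then uses (\ref{i}) to show $\|W_{\psi,\var}-W_{\psi,\var}\circ K_r\|\ra0$ as $r\ra1^-$. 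To repair your proof you should either establish the compactness criterion you invoke and derive its hypotheses from the compactness of $T_\psi$, or replace the composition piece by an approximation scheme of this latter kind.
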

\begin{proof}
Let $W_{\psi,\var}:\lam\ra\lbm$ be   compact.  Considering the bounded operator $T: X\rightarrow\lam, Tx=1_x$, we get $T_\psi=W_{\psi,\var}\circ T$ is compact. 
If (\ref{i}) does not hold, one can find a sequence $\{z_n\}$ of $\BD$ such that $|\var(z_n)|>\frac 12$, $|\var(z_n)|\rightarrow 1$ and 
$$\lim_{n\rightarrow\infty}\frac{(1-|z_n|^2)^{1-\beta}}{(1-|\var(z_n)|^2)^{1-\al}}|\var'(z_n)|\|\psi_{z_n}\|>0. $$
For every $n$, define
$$f_n(z)=\frac{1}{\overline{\var(z_n)}}\left(\frac{(1-|\var(z_n)|^2)^2}{(1-\overline{\var(z_n)}z)^{2-\al}}-\frac{1-|\var(z_n)|^2}{(1-\overline{\var(z_n)}z)^{1-\al}}\right)x_n$$
where $\{x_n\}$ is a sequence in $X$, for which $\|x_n\|\leq 1$ and $\frac{n}{n+1}\|\psi_{z_n}\|<\|\psi_{z_n}(x_n)\|$.
Clearly, $\{f_n\}$ is a bounded sequence in $\Lambda_\al^0(X)$ converging to zero on compact  subsets of $\BD$. Moreover,  $f_n(\var(z_n))=0$ and 
$f_n'(\var(z_n))=\frac{1}{(1-|\var(z_n)|^2)^{1-\al}}x_n$. Since $W_{\psi,\var}$ is compact, from lemma \ref{co} we deduce that $W_{\psi,\var}(f_n)\rightarrow 0$, as $n\ra\infty$. But
\begin{align*}
\|W_{\psi,\var}(f_n)\|_{\lbm}&\geq(1-|z_n|^2)^{1-\beta}\|\var '(z_n)f_n'(\var(z_n))\psi_{z_n}(x_n)+f_n(\var(z_n))\psi_{z_n}'(x_n)\|\\
&=\frac{(1-|z_n|^2)^{1-\beta}}{(1-|\var(z_n)|^2)^{1-\alpha}}|\var '(z_n)|\|\psi_{z_n}(x_n)\|\\
&>\frac{(1-|z_n|^2)^{1-\beta}}{(1-|\var(z_n)|^2)^{1-\alpha}}|\var '(z_n)|\|\psi_{z_n}\|\frac{n}{n+1},
\end{align*}
implies that  $\lim_{n\rightarrow\infty}\|W_{\psi,\var}(f_n)\|_{\lbm}>0$, which is impossible.

Conversely, let  $T_\psi$ be  compact and (\ref{i})  holds. Consider the operator $q_k:\lam\rightarrow X$, $f\mapsto\frac{f^{(k)}(0)}{k!}$.
By Cauchy's integral theorem,  for every $f\in\lam$ we have
$$\|q_k(f)\|\leq\frac{1}{2\pi}\oint_{|z|=\frac 12}\frac{\|f'(z)\|}{|z|^k}d|z|\leq\frac{2^k}{C}\|f\|_{\lam},$$
where $C$ is a positive constant. Therefore $q_k$ is a bounded linear operator.
We show that $M_{\var^k}:\lbm\rightarrow\lbm$, $f\mapsto \var^k f$ is  bounded.
Since $W_{\psi,\var}:\lam\ra\lbm$ is bounded, from Theorem \ref{bounded} we conclude  that
\begin{align*}
\sup_{z\in\BD}\frac{(1-|z|^2)^{1-\beta}}{(1-|\var(z)|^2)^{1-\al}}|\var '(z)|<\infty.
\end{align*}
Thus   for every  $f\in\lbm$ we have
\begin{align*}
\| M_{\var ^k}(f)\|_{\lbm}&=\sup_{z\in\BD}(1-|z|^2)^{1-\beta}\|(\var^k f)'(z)\|\\
&\leq\sup_{z\in\BD}(1-|z|^2)^{1-\beta}\| f'(z)\|+k\sup_{z\in\BD}(1-|z|^2)^{1-\beta}|\var'(z)|\| f(z)\|\\
&\leq\| f\|_{\lbm }+ k\sup_{z\in\BD}\frac{(1-|z|^2)^{1-\beta}}{(1-|\var(z)|^2)^{1-\al}}(1-|\var(z)|^2)^{1-\al}|\var'(z)|\|f(z)\|\\
&\leq(2+kC)\|f\|_{\lbm},
\end{align*}
where $C$ is a positive constant.  This shows that $M_{\var^k}$ is a bounded  operator on $\lbm$ and since  $T_\psi$ is compact, we deduce that
$W_{\psi,\var}\circ L_n=\sum_{k=0}^n M_{\var^k}\circ T_\psi\circ q_k$
 is a compact  operator. 
Since  $K_r f\in\lamo$, we see that 
$\|K_r f-L_n(K_rf)\|_{\lam}\rightarrow 0$ as $n\rightarrow\infty$ . Hence   $\|W_{\psi,\var}\circ K_r-W_{\psi,\var}\circ L_n\circ K_r\|\rightarrow 0$ as $n\rightarrow\infty$, which shows that $W_{\psi,\var}\circ K_r$ is a compact operator. For completing the proof we show that 
$\limsup_{r\rightarrow {1^-}}\|W_{\psi,\var}-W_{\psi,\var}\circ K_r\|=0$. For this, fix $0<\delta<1$.  For every $f\in\lam$ with
$\|f\|_{\lam}\leq1$ we have
\begin{align}
&\sup_{z\in\BD}(1-|z|^2)^{1-\beta}\|(W_{\psi,\var}(f-K_rf))'(z)\|\nonumber\\ &\leq 
\sup_{z\in\BD}(1-|z|^2)^{1-\beta}|\var'(z)|\|\psi_z\|\|f'(\var(z))-rf'(r\var(z))\|\nonumber\\
&+\sup_{z\in\BD}(1-|z|^2)^{1-\beta}\|\psi_z'\|\|f(\var(z))-f(r\var(z))\|\nonumber\\
&\leq \sup_{|\var(z)|\leq\delta}(1-|z|^2)^{1-\beta}|\var'(z)|\|\psi_z\|\|f'(\var(z))-f'(r\var(z))\|\label{x}\\
&+\sup_{|\var(z)|\leq\delta}(1-|z|^2)^{1-\beta}|\var'(z)|\|\psi_z\|\|f'(r\var(z))\|(1-r)\label{xx}\\
&+\sup_{|\var(z)|>\delta}(1-|z|^2)^{1-\beta}|\var'(z)|\|\psi_z\|\|(f-f_r)'(\var(z))\|\label{xxx}\\
&+\sup_{z\in\BD}(1-|z|^2)^{1-\beta}\|\psi_z'\|\|f(\var(z))-f(r\var(z))\|.\label{xxxx}
\end{align}
By Cauchy's integral theorem,  (\ref{x}) is not bigger than $\frac{(1-r)}{(1-\delta)^{3-\al}}$ and converges to zero whenever  $r\rightarrow 1^{-}$. Also,
\begin{align*}
(\ref{xx})\leq\sup_{|\var(z)|\leq\delta}\frac{(1-|z|^2)^{1-\beta}}{(1-|\var(z)|^2)^{1-\alpha}}|\var'(z)|\|\psi_z\|(1-r)\rightarrow 0,
\end{align*}
as $r\rightarrow 1^{-}$ and
\begin{align*}
(\ref{xxx})\leq 2\sup_{|\var(z)|>\delta}\frac{(1-|z|^2)^{1-\beta}}{(1-|\var(z)|^2)^{1-\alpha}}|\var'(z)|\|\psi_z\|.
\end{align*}
Moreover,
\begin{align*}
(\ref{xxx})\leq\sup_{z\in\BD}\|\psi_z\|\|f\|_{\al}|\var(z)|(1-r)^\alpha\ra 0,
\end{align*}
as $r\rightarrow 1^{-}$.
Thus
\begin{align*}
\lim_{r\rightarrow {1^-}}\|W_{\psi,\var}-W_{\psi,\var}\circ K_r\|=\lim_{r\rightarrow {1^-}}\sup_{\|f\|\leq 1}\|W_{\psi,\var}(f)-W_{\psi,\var}\circ K_r(f)\|\\
\leq 2\sup_{|\var(z)|>\delta}\frac{(1-|z|^2)^{1-\beta}}{(1-|\var(z)|^2)^{1-\alpha}}|\var'(z)|\|\psi_z\|.
\end{align*}
Letting $\delta\rightarrow 1$, we have
\begin{align*}
\lim_{r\rightarrow 1^{-}}\|W_{\psi,\var}-W_{\psi,\var}\circ K_r\|\leq2\lim_{\delta\rightarrow1}\sup_{|\var(z)|>\delta}\frac{(1-|z|^2)^{1-\beta}}{(1-|\var(z)|^2)^{1-\alpha}}|\var'(z)|\|\psi_z\|=0,
\end{align*}
which ensures that $W_{\psi,\var}$ is compact.
\end{proof}


\section{Weighted composition operators on  $\lip_A(\overline\BD,X,\al) $ and $\lamo$ }
In this section we characterize   bounded and compact weighted composition operators  on the spaces of analytic  vector-valued little  Lipschitz functions. 

 Ohno, et al. in \cite [Theorem\  4.1]{ ohno} showed that for $\al\in(0,1)$,   the operator $W_{\psi,\var}:\Lambda_\alpha ^0\rightarrow\Lambda_\beta ^0$
is bounded if and only if $\psi\in\Lambda_\beta^0$, $W_{\psi,\var}:\Lambda_\alpha \rightarrow\Lambda_\beta $ is bounded and 
$\lim_{|z|\rightarrow 1^{-}}(1-|z|^2)^{1-\beta}\var'(z)\psi_z=0$. For the  vector-valued case, we have weaker results. That is, if  $W_{\psi,\var}:\lamo\rightarrow\lbmo$ is well defined and bounded, then
 $\psi\in\Lambda_\beta(\BD, L(X,Y))$   and the point wise limit of $(1-|z|^2)^{1-\beta}\var'(z)\psi_z$ is zero, whenever $|z|\rightarrow 1^{-}$. 

By the next theorem, we provide some  sufficient conditions for the weighted composition operator $W_{\psi,\var}:\lamo\rightarrow\lbmo$ to be well defined and bounded.
\begin{theorem}
Let $W_{\psi,\var}:\lam\rightarrow\lbm$  be bounded, $\psi\in\Lambda_\beta^0(\BD, L(X,Y))$ and 
$\lim_{|z|\rightarrow 1^{-}}(1-|z|^2)^{1-\beta}\var'(z)\psi_z=0$. Then $W_{\psi,\var}:\lamo\rightarrow\lbmo$ is well defined and bounded.
\end{theorem}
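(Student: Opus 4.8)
The plan is to verify first that $W_{\psi,\var}$ genuinely sends $\lamo$ into $\lbmo$ (well-definedness) and then to observe that boundedness comes for free. Recall from the construction in the proof of Lemma~\ref{co} that, for a parameter in $(0,1)$, a function $g$ lies in the little space precisely when $(1-|z|^2)^{1-\gamma}g'(z)\in C_o(\BD,\cdot)$, i.e. it vanishes as $|z|\to1$; thus $f\in\lamo$ means $\lim_{|z|\to1}(1-|z|^2)^{1-\al}\|f'(z)\|=0$, while $\psi\in\Lambda_\beta^0(\BD,L(X,Y))$ means $\lim_{|z|\to1}(1-|z|^2)^{1-\beta}\|\psi_z'\|=0$. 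Fix $f\in\lamo$. Using the derivative formula $(W_{\psi,\var}(f))'(z)=\var'(z)\psi_z(f'(\var(z)))+\psi_z'(f(\var(z)))$, I would estimate
\begin{align*}
(1-|z|^2)^{1-\beta}\|(W_{\psi,\var}(f))'(z)\|
&\le (1-|z|^2)^{1-\beta}|\var'(z)|\,\|\psi_z\|\,\|f'(\var(z))\|\\
&\quad+(1-|z|^2)^{1-\beta}\|\psi_z'\|\,\|f(\var(z))\|,
\end{align*}
reducing the problem to showing that each right-hand term vanishes as $|z|\to1$.

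For the first term the idea is a case split on the size of $|\var(z)|$, playing the two hypotheses against the a~priori bound. Given $\varepsilon>0$, I would use $f\in\lamo$ to choose $\delta\in(0,1)$ with $(1-|w|^2)^{1-\al}\|f'(w)\|<\varepsilon$ whenever $|w|>\delta$, and set $M=\sup_{|w|\le\delta}\|f'(w)\|<\infty$. Since $W_{\psi,\var}:\lam\to\lbm$ is bounded, Theorem~\ref{bounded} supplies the finite constant $q_{\al,\beta}$. When $|\var(z)|>\delta$, I would factor
\[
(1-|z|^2)^{1-\beta}|\var'(z)|\,\|\psi_z\|\,\|f'(\var(z))\|
=\frac{(1-|z|^2)^{1-\beta}}{(1-|\var(z)|^2)^{1-\al}}|\var'(z)|\,\|\psi_z\|\cdot(1-|\var(z)|^2)^{1-\al}\|f'(\var(z))\|,
\]
which is at most $q_{\al,\beta}\,\varepsilon$. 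When $|\var(z)|\le\delta$, the factor $\|f'(\var(z))\|\le M$, and the hypothesis $\lim_{|z|\to1}(1-|z|^2)^{1-\beta}\var'(z)\psi_z=0$ (an operator-norm limit, so $(1-|z|^2)^{1-\beta}|\var'(z)|\|\psi_z\|\to0$) forces this contribution below $\varepsilon M$ for $|z|$ close to $1$. Hence the first term is eventually bounded by $\max\{q_{\al,\beta},M\}\,\varepsilon$, and therefore tends to $0$.

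The second term is easier: $\|f(\var(z))\|\le\|f\|_{\BD}<\infty$, since every function in $\lam$ is bounded by definition of the Lipschitz norm, so this term is dominated by $\|f\|_{\BD}\,(1-|z|^2)^{1-\beta}\|\psi_z'\|$, which converges to $0$ because $\psi\in\Lambda_\beta^0(\BD,L(X,Y))$. Combining the two estimates yields $\lim_{|z|\to1}(1-|z|^2)^{1-\beta}\|(W_{\psi,\var}(f))'(z)\|=0$, i.e. $W_{\psi,\var}(f)\in\lbmo$, which is exactly well-definedness. Boundedness then follows immediately: the norm on $\lamo$ (resp.\ $\lbmo$) is just the restriction of the norm on $\lam$ (resp.\ $\lbm$), so $\|W_{\psi,\var}f\|_{\lbmo}=\|W_{\psi,\var}f\|_{\lbm}\le\|W_{\psi,\var}:\lam\to\lbm\|\,\|f\|_{\lam}=\|W_{\psi,\var}:\lam\to\lbm\|\,\|f\|_{\lamo}$. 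The main obstacle is precisely the first term, where neither hypothesis alone is enough; the whole point is the factorization above, which lets the bounded factor $q_{\al,\beta}$ absorb the regime where $\var(z)$ runs to the boundary, while the hypothesis on $\var'\psi$ disposes of the regime where $\var(z)$ stays in a compact subset of $\BD$.
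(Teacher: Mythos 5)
Your proof is correct and follows essentially the same route as the paper: the same case split on $|\var(z)|$ relative to a cutoff, with the finite supremum $q_{\al,\beta}$ absorbing the regime where $\var(z)$ approaches the boundary and the hypothesis $\lim_{|z|\to1^-}(1-|z|^2)^{1-\beta}\var'(z)\psi_z=0$ handling the regime where $\var(z)$ stays in a compact set, plus the $\psi_z'$ term controlled by $\psi\in\Lambda_\beta^0(\BD,L(X,Y))$ and the boundedness of $f$. The only (harmless) divergence is at the end: the paper invokes the closed graph theorem for boundedness, whereas you observe more directly that the norm bound is inherited from $\|W_{\psi,\var}:\lam\to\lbm\|$ once well-definedness is established.
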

\begin{proof}
We just show that $W_{\psi,\var}:\lamo\rightarrow\lbmo$ is well defined.  The boundedness can be shown by means of the  closed graph theorem. 
 Let $f\in\lamo$. Given $\epsilon>0$, for some $r\in(0,1)$ and for every $z\in\BD$ with $r<|z|<1$, we have
 \begin{align*}
&(1-|z|^2)^{1-\beta}\|\psi_z'\|<\frac{\al\epsilon}{4\|f\|_{\lam}},\\
&(1-|z|^2)^{1-\alpha}\|f'(z)\|<\frac{\epsilon}{4M},\\
&(1-|z|^2)^{1-\beta}|\var'(z)|\|\psi_z\|<\frac{\epsilon}{4L}, 
\end{align*}
where 
\begin{align*}M=\sup_{z\in\BD}\frac{(1-|z|^2)^{1-\beta}}{(1-|\var(z)|^2)^{1-\alpha}}|\var'(z)|\|\psi_z\|<\infty,
\end{align*}
 and 
 \begin{align*}
 L=\max\{\sup_{|z|\leq r}\|f(z)\|,\sup_{|z|\leq r}\|f'(z)\|\}<\infty.
 \end{align*}
  Fix $z\in\BD$ with $r<|z|<1$. If $r<|\var(z)|<1$, then
\begin{align*}
&(1-|z|^2)^{1-\beta}|\var'(z)|\|\psi_z(f'(\var(z)))\|\\
&\leq \frac{(1-|z|^2)^{1-\beta}}{(1-|\var(z)|^2)^{1-\alpha}}
(1-|\var(z)|^2)^{1-\alpha}|\var'(z)|\|\psi_z\|\| f'(\var(z))\|<\frac{\epsilon}{4},
\end{align*}
and for $|\var(z)|\leq r$ we have
\begin{align*}
(1-|z|^2)^{1-\beta}|\var'(z)|\|\psi_z(f'(\var(z)))\|\leq L(1-|z|^2)^{1-\beta}
|\var'(z)|\|\psi_z\|<\frac{\epsilon}{4}.
\end{align*} 
This shows that
\begin{align*}
\sup_{r<|z|<1}(1-|z|^2)^{1-\beta}|\var'(z)|\|\psi_z(f'(\var(z)))\|<\frac{\epsilon}{2}.
\end{align*}
It is easy to check that  $\|f((\var(z))\|\leq\frac{1}{\al}\|f\|_{\lam}$.
Thus
\begin{align*}
\sup_{r<|z|<1}(1-|z|^2)^{1-\beta}\|(W_{\psi,\var}(f))'(z)\|& \leq
 \sup_{r<|z|<1}(1-|z|^2)^{1-\beta}|\var'(z)|\|\psi_z(f'(\var(z)))\|\\
 &+\sup_{r<|z|<1}(1-|z|^2)^{1-\beta}\|\psi_z'(f(\var(z)))\|
<\epsilon,
\end{align*}
which ensures  that $W_{\psi,\var}(f)\in\lbmo$. 
\end{proof}
For characterizing the compact weighted composition operators between analytic little Lipschitz spaces we need the next lemma.
\begin{lemma}\label{ascoli}
A subset  $K$ of $\lip_A(\overline{\BD},X,\al)$ is relatively compact if and only if
\begin{itemize}
\item[(i)] $K$ is bounded,
\item[(ii)] $K(z)=\{f(z):\ \ f\in K\}$ is relatively compact for every $z\in\overline\BD$,
\item[(iii)] $\lim\limits_{\substack{|z|\rightarrow 1^-\\ z\in\BD}}\sup_{f\in K}(1-|z|^2)^{1-\alpha}\|f'(z)\|=0.$
\end{itemize}
\end{lemma}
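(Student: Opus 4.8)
The plan is to transfer the statement to $\lamo$ through the homeomorphism $E$ (equivalently the restriction $R$) discussed just before Proposition \ref{proo}, so that I may work with the equivalent Bloch-type norm $\|\cdot\|_{\Lambda_\al(X)}$ furnished by Proposition \ref{pro}. Under this identification condition (iii) is precisely the \emph{uniform over $K$} version of the single-function little-oh condition $\lim_{|z|\to 1}(1-|z|^2)^{1-\al}\|f'(z)\|=0$ that characterizes membership in $\lamo$. For the necessity direction, relative compactness gives total boundedness, hence (i); each point-evaluation $f\mapsto f(z)$ is a bounded operator from $\lip_A(\ovl,X,\al)$ into $X$, so it carries the relatively compact set $K$ onto a relatively compact subset of $X$, giving (ii). For (iii) I would cover the closure of $K$ by finitely many $\|\cdot\|_\al$-balls of radius $\epsilon$ centered at $f_1,\dots,f_N\in\lip_A(\ovl,X,\al)$; each $f_j$, belonging to $\lamo$ under the identification, satisfies $(1-|z|^2)^{1-\al}\|f_j'(z)\|\to 0$ as $|z|\to 1$, and a triangle-inequality argument together with the norm equivalence of Proposition \ref{pro} upgrades this to the uniform limit (iii).

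For sufficiency, I would take a sequence $\{f_n\}\subset K$ and extract a convergent subsequence. The uniform Lipschitz bound coming from (i) makes $\{f_n\}$ equicontinuous on the compact set $\ovl$, so combined with the pointwise relative compactness (ii), the vector-valued Arzel\`a--Ascoli theorem yields a subsequence $\{f_{n_k}\}$ converging uniformly on $\ovl$ to some $f\in C(\ovl,X)$. Since a uniform limit of analytic functions is analytic, $f$ is analytic on $\BD$, hence $f\in A(\ovl,X)$, and Cauchy's estimates give $f_{n_k}'\to f'$ uniformly on compact subsets of $\BD$. Lower semicontinuity of $p_\al$ under pointwise convergence bounds $p_\al(f)$, and passing condition (iii) to the limit along $f_{n_k}'\to f'$ shows that $f$ itself satisfies the little-oh bound; thus $f\in\lip_A(\ovl,X,\al)$.

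It then remains to upgrade uniform convergence to convergence in $\|\cdot\|_\al$. By Proposition \ref{pro} this reduces to showing $\sup_{z\in\BD}(1-|z|^2)^{1-\al}\|f_{n_k}'(z)-f'(z)\|\to 0$, since $\|f_{n_k}-f\|_{\ovl}\to 0$ is already in hand. Fixing $\epsilon>0$, I would use (iii) to pick $r<1$ with $\sup_{g\in K}(1-|z|^2)^{1-\al}\|g'(z)\|<\epsilon$ whenever $|z|>r$; as $f$ inherits the same bound, the supremum over $\{|z|>r\}$ of $(1-|z|^2)^{1-\al}\|f_{n_k}'(z)-f'(z)\|$ is at most $2\epsilon$. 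On the complementary compact set $\{|z|\le r\}$ the weight is bounded and $f_{n_k}'\to f'$ uniformly, so that contribution tends to $0$. Letting $k\to\infty$ and then $\epsilon\to 0$ gives norm convergence, and since $\lip_A(\ovl,X,\al)$ is a Banach space, sequential relative compactness of $K$ is equivalent to relative compactness.

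The main obstacle is exactly this last step: uniform convergence alone never controls the Lipschitz seminorm near $\partial\BD$, where the weight $(1-|z|^2)^{1-\al}$ degenerates and the Cauchy-estimate control of $f_{n_k}'-f'$ breaks down. It is precisely the uniform little-oh hypothesis (iii) — strictly stronger than the boundedness (i) — that tames the boundary behaviour simultaneously for all members of $K$ and forces convergence in $\|\cdot\|_\al$ rather than merely in the sup norm.
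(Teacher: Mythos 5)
Your argument is correct and follows essentially the same route as the paper: necessity of (iii) via a finite $\epsilon$-net and the triangle inequality, and sufficiency via the vector-valued Arzel\`a--Ascoli theorem, Cauchy estimates for locally uniform convergence of the derivatives, and a split of the supremum at a radius $r$ supplied by (iii). The only cosmetic difference is that you identify the limit function and prove convergence to it, whereas the paper shows the subsequence is Cauchy in the Lipschitz norm and invokes completeness, which lets it skip your intermediate verification that the limit lies in $\lip_A(\overline{\BD},X,\al)$.
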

\begin{proof}
We begin by proving that (iii) is necessary. Suppose that $K$ is relatively compact and let $C$ be a positive constant such that $\|\cdot\|_{\lam}\leq C\|\cdot\|_\al$. Given $\epsilon>0$, there are functions $f_1, f_2, \cdots, f_n\in K$ such that for every $f\in K$ and for some $1\leq j\leq n$, we have
$\|f-f_j\|_{\al}<\frac{\epsilon}{2C}$ which ensures  that 
$$(1-|z|^2)^{1-\alpha}\|f'(z)\|\leq (1-|z|^2)^{1-\alpha}\|f_j'(z)\|+\frac{\epsilon}{2}, \ \  (z\in\BD).$$
 For each $1\leq j\leq n$, there exists $r_j\in(0,1)$
 such that 
 $$\sup_{r_j<|z|<1}(1-|z|^2)^{1-\alpha}\|f_j'(z)\|<\frac{\epsilon}{2}.$$ 
 Setting $r=\max\{r_1, \cdots, r_n\}$ yields the assertion
 $$\sup_{r<|z|<1}(1-|z|^2)^{1-\alpha}\| f'(z)\|\leq\epsilon, \ \ \  (f\in K)$$
 and (iii) holds. 
 
 For the converse, let $\{f_n\}$ be a bounded sequence in $\lip_A(\overline{\BD},X,\al)$. Hence, $\{f_n\}$ is an equicontinuous sequence in $C(\overline\BD,X)$. From (ii) and by generalized 
 Arzela- Ascoli theorem, \cite [Theorem A]{chan},  $\{f_n\}$ is relatively compact in $C(\overline\BD,X)$. Thus  there exists a  subsequence $\{f_{n_k}\}$ of $\{f_n\}$ which is  Cauchy in $C(\overline\BD,X)$.  Let $g_n=f_{n}|_{\BD}$.
 We show that $\{g_{n_k}\}$ is a Cauchy sequence  in $\lamo$. 
 Fix $\epsilon>0$.  By (iii),  there exists   $0<r<1$ such that for every $j$,
 $$(1-|z|^2)^{1-\alpha}\| g_{n_j}'(z)\|<\frac \epsilon 4, \ \ \ \ (r<|z|<1).$$
  It is  easy to check  that
 $\{g_{n_k}'\}$ is Cauchy with respect to compact open topology on $C(\BD,X)$. Thus  for $k$ and $ l$ sufficiently  large, we have
   $$\sup_{|z|\leq r}\|g_{n_k}'(z)-g_{n_l}'(z)\|<\frac{\epsilon}{2}.$$
Hence,
 \begin{align*}
 \sup_{z\in\BD}(1-|z|^2)^{1-\alpha} \|g_{n_k}'(z)-g_{n_l}'(z)\|&\leq \sup_{|z|\leq r}(1-|z|^2)^{1-\alpha}\|g_{n_k}'(z)-g_{n_l}'(z)\|\\
 &+\sup_{r<|z|<1}(1-|z|^2)^{1-\alpha}\|g_{n_k}'(z)-g_{n_l}'(z)\|\\
 &<\epsilon,
 \end{align*}
 which implies that $\{g_{n_k}\}$ is a Cauchy sequence in $\lamo$. Thus $\{f_n\}$ is Cauchy  in $\lip_A(\overline{\BD},X,\al)$, as desired.
\end{proof}
Regarding the  arguments following the proof of Proposition \ref{pro}, for every $\psi\in\Lambda_\beta(K(X,Y))$ there exists an operator-valued function $\Psi\in\Lip_A(\overline{\BD},L(X,Y),\beta)$ such that $\Psi_{|\BD}=\psi$. More precisely, for every $z\in\partial\BD$, $\Psi_z=\lim_{n\ra\infty}\psi_{z_n}$, where $\{z_n\}$ is any sequence in $\BD$ converging to $z$. Since for every $n$,  $\psi_{z_n}:X\ra Y$  is compact, we deduce that  $\Psi_{z}$ is a compact linear operator from $X$ in to $Y$ and $\Psi\in\Lip_A(\overline{\BD},K(X,Y),\beta)$.

Now we can  characterize the compact weighted composition operators from  $\lamo$ into $\lbmo$.
\begin{theorem}\label{compactness}
Let $W_{\psi,\var}:\lamo\rightarrow\lbmo$ be a bounded weighted composition operator.
\begin{itemize}
\item[(i)] If $W_{\psi,\var}$ is compact, then $\psi\in\Lambda_\beta(K(X,Y))$ and 
\begin{align}\label{y}
\lim_{|z|\rightarrow 1^-}\frac{(1-|z|^2)^{1-\beta}}{(1-|\var(z)|^2)^{1-\alpha}}|\var'(z)|\|\psi_z\|=0.
\end{align}
\item[(ii)] If $\psi\in\Lambda_\beta^0(K(X,Y))$ and (\ref{y}) holds, then $W_{\psi,\var}$ is compact.
\end{itemize}
\end{theorem}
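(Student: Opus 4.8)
The plan is to handle the two implications separately, in each case reducing to the Arzel\`a--Ascoli-type criterion of Lemma \ref{ascoli} via the extension homeomorphism $E\colon\lbmo\to\lip_A(\ovl,Y,\beta)$. For (i), assume $W_{\psi,\var}$ is compact. From boundedness we already know $\psi\in\Lambda_\beta(L(X,Y))$ (as recorded just before the theorem), so it remains to see that each value $\psi_z$ is compact. Here I would use the factorization $T_\psi=W_{\psi,\var}\circ T$, where $T\colon X\to\lamo$, $Tx=1_x$, is the bounded embedding by constant functions (these lie in $\lamo$); thus $T_\psi\colon X\to\lbmo$ is compact. Composing $T_\psi$ with the bounded evaluation $g\mapsto g(z)$ from $\lbmo$ to $Y$ recovers $\psi_z$, which is therefore compact for every $z\in\BD$. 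Since $K(X,Y)$ is closed in $L(X,Y)$, this promotes $\psi\in\Lambda_\beta(L(X,Y))$ to $\psi\in\Lambda_\beta(K(X,Y))$.

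To obtain (\ref{y}), I would note that $K:=W_{\psi,\var}(U)$, with $U$ the closed unit ball of $\lamo$, is relatively compact, so condition (iii) of Lemma \ref{ascoli} applied to $E(K)$ gives
\[\lim_{|z|\to1^-}\sup_{f\in U}(1-|z|^2)^{1-\beta}\|(W_{\psi,\var}f)'(z)\|=0.\]
Into this I would feed, for each $z$ with $\var(z)\neq0$, the normalized test functions $f_z$ of the form used in the proof of Theorem \ref{com}, built from a unit vector $x_z$ with $\|\psi_z(x_z)\|\geq\tfrac12\|\psi_z\|$; these are uniformly bounded in $\lamo$, satisfy $f_z(\var(z))=0$ and $f_z'(\var(z))=(1-|\var(z)|^2)^{-(1-\al)}x_z$, so that $(W_{\psi,\var}f_z)'(z)=\var'(z)(1-|\var(z)|^2)^{-(1-\al)}\psi_z(x_z)$. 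The displayed limit then forces $\frac{(1-|z|^2)^{1-\beta}}{(1-|\var(z)|^2)^{1-\al}}|\var'(z)|\|\psi_z\|\to0$ as $|z|\to1^-$, the isolated zeros of $\var$ being absorbed by the simpler linear test function $w\mapsto wx$.

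For the converse (ii) I would verify the three conditions of Lemma \ref{ascoli} for $E(W_{\psi,\var}(U))$. Boundedness (i) is clear. For the pointwise relative compactness (ii), at $w\in\BD$ we have $W_{\psi,\var}(f)(w)=\psi_w(f(\var(w)))$, and since $\{f(\var(w)):f\in U\}$ is bounded in $X$ while $\psi_w\in K(X,Y)$ is compact, the set $\{W_{\psi,\var}(f)(w):f\in U\}$ is relatively compact; the boundary values $w\in\partial\BD$ are handled identically using the extension $\Psi_w\in K(X,Y)$ supplied by the remark preceding the theorem. For (iii) I would estimate
\[(1-|z|^2)^{1-\beta}\|(W_{\psi,\var}f)'(z)\|\leq\frac{(1-|z|^2)^{1-\beta}}{(1-|\var(z)|^2)^{1-\al}}|\var'(z)|\|\psi_z\|\,\|f\|+\tfrac1\al(1-|z|^2)^{1-\beta}\|\psi_z'\|\,\|f\|,\]
using $\|f'(\var(z))\|\leq(1-|\var(z)|^2)^{-(1-\al)}\|f\|$ and $\|f(\var(z))\|\leq\tfrac1\al\|f\|$; the first summand tends to $0$ uniformly over $\|f\|\leq1$ by (\ref{y}) and the second by $\psi\in\Lambda_\beta^0(L(X,Y))$. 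Hence all three conditions hold, $E(W_{\psi,\var}(U))$ is relatively compact, and $W_{\psi,\var}$ is compact.

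The essential use of the hypotheses is in part (ii), condition (ii) of Lemma \ref{ascoli}: it is precisely the compactness of the individual values $\psi_w$ --- that is, $\psi\in\Lambda_\beta^0(K(X,Y))$ rather than merely $\Lambda_\beta^0(L(X,Y))$ --- that makes the pointwise evaluation sets relatively compact, a feature absent from the scalar theory. I expect the only point needing real care to be the passage between $\lbmo$ and $\lip_A(\ovl,Y,\beta)$ at boundary points, which rests on knowing $\Psi\in\Lip_A(\ovl,K(X,Y),\beta)$; the analytic estimates in (iii) and the test-function extraction in part (i) are otherwise routine once the reduction to Lemma \ref{ascoli} is in place.
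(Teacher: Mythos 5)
Your proof is correct and follows essentially the same route as the paper: part (i) extracts (\ref{y}) from condition (iii) of Lemma \ref{ascoli} via the test functions $f_{\var(z),x}$, and part (ii) rests on the compactness of the values $\Psi_w$ together with the boundary splitting estimate, which is exactly the content of the paper's sequence/Arzel\`a--Ascoli argument (you merely cite Lemma \ref{ascoli} where the paper re-derives its sufficiency direction by hand). A small bonus on your side: the factorization $\psi_z=\delta_z\circ W_{\psi,\var}\circ T$ explicitly establishes $\psi\in\Lambda_\beta(K(X,Y))$, a claim the paper's proof of (i) states but never actually verifies.
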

\begin{proof}
(i) Let $U_\al$ be the closed unit ball of $\lamo$. Since  $W_{\psi,\var}$ is compact,  $W_{\psi,\var}(U_\al)$ is relatively compact in $\lbmo$ and hence,
$E(W_{\psi,\var}(U_\al))$ is relatively compact in $\lip_A(\overline{\BD},Y,\beta)$. By Lemma \ref{ascoli}, we have
\begin{align*}
\lim_{|z|\rightarrow 1^-}\sup_{f\in U_\al}(1-|z|^2)^{1-\beta}\|(W_{\psi,\var}(f))'(z)\|=0.
\end{align*}
Given $\epsilon>0$, there exists $r\in(0,1)$ such that 
\begin{align}\label{yy}
\sup_{r<|z|<1}\sup_{f\in U_\al}(1-|z|^2)^{1-\beta}\|(W_{\psi,\var}(f))'(z)\|<\frac{\epsilon}{3}.
\end{align}
For every $x\in X$ and every nonzero $a\in\BD$, define 
$$f_{a,x}(z)=\frac 1{\overline a}\left(\frac{1-|a|^2}{(1-\overline a z)^{1-\alpha}}-(1-\overline a z)^{\alpha}\right)x.$$
One can  see that $\{f_{a,x}:0\neq a, \|x\|\leq 1\}$ is a bounded subset of $\lamo$. Moreover,
 $f_{a,x}(a)=0$, $f_{a,x}'(a)=\frac{x}{(1-|a|^2)^{1-\alpha}}$ and $\sup_{\substack{\|x\|\leq 1\\ z\in\BD}}\|f_{a,x}\|_{\lam}\leq 3$.   Since $(W_{\psi,\var}(f_{\var(z),x}))'(z)=\frac{\var'(z)}{(1-|\var(z)|^2)^{1-\alpha}}\psi_z(x)$, by relation (\ref{yy}) we have
 \begin{align*}
\sup_{r<|z|<1}\frac{(1-|z|^2)^{1-\beta}}{(1-|\var(z)|^2)^{1-\alpha}}|\var'(z)|\|\psi_z\|&
=\sup_{r<|z|<1}\sup_{\|x\|\leq1}\frac{(1-|z|^2)^{1-\beta}}{(1-|\var(z)|^2)^{1-\alpha}}|\var'(z)|\|\psi_zx\|\\
&=\sup_{r<|z|<1}\sup_{\|x\|\leq1}(1-|z|^2)^{1-\beta}\|(W_{\psi,\var}(f_{\var(z),x}))'(z)\|\\&<\epsilon.
\end{align*}
(ii) Let $\psi\in\Lambda_\beta^0(K(X,Y))$ and (\ref{y}) holds. Fixing $\epsilon>0$, for some 
$r\in(0,1)$ we have
\begin{align*}
\sup_{r<|z|<1}\frac{(1-|z|^2)^{1-\beta}}{(1-|\var(z)|^2)^{1-\alpha}}|\var'(z)|\|\psi_z\|&<\frac{\epsilon}{8},\end{align*}
and
\begin{align*}
\sup_{r<|z|<1}(1-|z|^2)^{1-\beta}\|\psi'_z\|<\frac{\al\epsilon}{8}.
\end{align*}
Let $\{f_n\}$ be a bounded sequence in $\lamo$ such that $\|f_n\|_{\lam}\leq 1$. 
Then  $\{E(f_n)\}$ is a bounded sequence in $\lip_A(\overline{\BD},X,\al)$ and $\{W_{\Psi,\phi}(E(f_n))\}$ is a bounded sequence in $\lip_A(\overline{\BD},Y,\beta)$, which  implies that $\{W_{\Psi,\phi}(E(f_n))\}$ is equicontinuous. For every $z\in\overline\BD$, 
$\{E(f_n)(\phi(z))\}$ is a bounded sequence in $X$ and $\Psi_z:X\ra Y$ is a compact operator. Hence, $\{W_{\Psi,\phi}(E(f_n))(z)\}$
is relatively compact in $Y$.
From  Arzela-Ascoli theorem we deduce that  $\{W_{\Psi,\phi}(E(f_n))\}$ is relatively compact in $C(\overline{\BD},Y)$. Therefore, there exists a subsequence  $\{f_{n_k}\}$ of $\{f_n\}$ such that $\{W_{\psi,\var}(f_{n_k})\}$ is uniformly convergent 
on compact subsets of $\BD$.  Using Cauchy's integral theorem, one can show that $\{(W_{\psi,\var}(f_{n_k}))'\}$ is convergent with respect  to the compact open topology. Thus
for $k$ and $ l$ sufficiently large,
\begin{align*}
\sup_{|z|\leq r}\|(W_{\psi,\var}(f_{n_k}))'(z)-(W_{\psi,\var}(f_{n_l}))'(z)\|<\frac{\epsilon}{4}.
\end{align*}
Therefore,
\begin{align*}
&\sup_{z\in\BD}
(1-|z|^2)^{1-\beta}\|(W_{\psi,\var}(f_{n_k}))'(z)-(W_{\psi,\var}(f_{n_l}))'(z)\|\\
&\leq \sup_{|z|\leq r}(1-|z|^2)^{1-\beta}\|(W_{\psi,\var}(f_{n_k}))'(z)-(W_{\psi,\var}(f_{n_l}))'(z)\|\\
&+\sup_{r<|z|<1}(1-|z|^2)^{1-\beta}|\var'(z)|\|\psi_z(f_{n_k}'(\var(z))-f_{n_l}'(\var(z)))\|\\
&+\sup_{r<|z|<1}(1-|z|^2)^{1-\beta}\|\psi_z'(f_{n_k}(\var(z))-f_{n_l}(\var(z)))\|\\
&<\frac{\epsilon}{4}+2\sup_{r<|z|<1}\frac{(1-|z|^2)^{1-\beta}}{(1-|\var(z)|^2)^{1-\alpha}}|\var'(z)|\|\psi_z\|\\
&+
\frac{2}{\al}\sup_{r<|z|<1}(1-|z|^2)^{1-\beta}\|\psi_z'\|<\epsilon.
\end{align*}
We  conclude that $\{W_{\psi,\var}(f_{n_k})\}$ is a Cauchy sequence  in $\lbmo$ and hence $W_{\psi,\var}$ is a compact operator.
\end{proof}
The next corollary is  an immediate consequence of 
  Theorem \ref{compactness}.
\begin{corollary} Let    $W_{\Psi,\phi}:\lip_A(\overline{\BD},X,\al)\ra\lip_A(\overline{\BD},Y,\beta)$  be a bounded weighted composition operator.
\begin{itemize}
\item[(i)] If $W_{\Psi,\phi}$ is compact, then $\Psi\in\Lip_A(\overline{\BD},K(X,Y),\beta)$ and 
\begin{align}\label{zzz}
\lim_{|z|\rightarrow 1^-}\frac{(1-|z|^2)^{1-\beta}}{(1-|\var(z)|^2)^{1-\alpha}}|\var'(z)|\|\psi_z\|=0.
\end{align}
\item[(ii)] If $\Psi\in\lip_A(\overline{\BD},K(X,Y),\beta)$ and (\ref{zzz}) holds, then $W_{\Psi,\phi}$ is compact.
\end{itemize}
\end{corollary}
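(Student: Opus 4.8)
The plan is to deduce the corollary directly from Theorem \ref{compactness} by transferring everything through the extension--restriction correspondence already set up in Proposition \ref{proo}. The same commutative-diagram argument used there for $\Lip_A(\ovl,X,\al)$ and $\lam$ applies verbatim to the little Lipschitz spaces: the restriction map $R\colon f\mapsto f|_\BD$ carries $\lip_A(\ovl,X,\al)$ into $\lamo$, the extension map $E$ carries $\lamo$ back onto $\lip_A(\ovl,X,\al)$, these are mutually inverse homeomorphisms, and they satisfy $E\circ W_{\psi,\var}\circ R=W_{\Psi,\phi}$ and $R\circ W_{\Psi,\phi}\circ E=W_{\psi,\var}$ with $\psi=\Psi|_\BD$ and $\var=\phi|_\BD$. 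Since $E$ and $R$ are bounded both ways, composing a compact operator with them preserves compactness; consequently $W_{\Psi,\phi}\colon\lip_A(\ovl,X,\al)\ra\lip_A(\ovl,Y,\beta)$ is compact if and only if $W_{\psi,\var}\colon\lamo\ra\lbmo$ is compact, and I would only need to match the symbol conditions of Theorem \ref{compactness} with those in the statement.

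For part (i), assume $W_{\Psi,\phi}$ is compact. Then $W_{\psi,\var}$ is compact, so Theorem \ref{compactness}(i) yields $\psi\in\Lambda_\beta(K(X,Y))$ together with relation (\ref{y}). Since $\var=\phi|_\BD$, the limit in (\ref{y}) is literally the limit in (\ref{zzz}), so (\ref{zzz}) holds. It then remains to promote $\psi\in\Lambda_\beta(K(X,Y))$ to $\Psi\in\Lip_A(\ovl,K(X,Y),\beta)$, and this is exactly the extension remark recorded just before Theorem \ref{compactness}: for each boundary point $z$ one sets $\Psi_z=\lim_{n\to\infty}\psi_{z_n}$ along any sequence $z_n\to z$ in $\BD$, and because $K(X,Y)$ is closed in $L(X,Y)$ this norm limit of compact operators is compact, giving $\Psi_z\in K(X,Y)$ and $\Psi\in\Lip_A(\ovl,K(X,Y),\beta)$.

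For part (ii), assume $\Psi\in\lip_A(\ovl,K(X,Y),\beta)$ and (\ref{zzz}) holds. Restricting, $\psi=\Psi|_\BD\in\Lambda_\beta^0(K(X,Y))$, since the restriction of a little Lipschitz operator-valued function is again little Lipschitz and takes values in $K(X,Y)$, and (\ref{zzz}) is once more (\ref{y}). Thus the hypotheses of Theorem \ref{compactness}(ii) are met, whence $W_{\psi,\var}\colon\lamo\ra\lbmo$ is compact, and by the correspondence above $W_{\Psi,\phi}$ is compact.

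The only point that is not completely mechanical is the little Lipschitz analogue of Proposition \ref{proo}, namely that $E$ and $R$ genuinely intertwine the two operators and preserve the little Lipschitz class; I expect this to be the main, though minor, obstacle. It is handled exactly as in Proposition \ref{proo}, using that each $f\in\lip_A(\ovl,X,\al)$ is the continuous extension of its restriction and that the value of $W_{\Psi,\phi}(f)$ at a boundary point $z$ equals $\lim_{n\to\infty}\psi_{z_n}f(\var(z_n))$ along any $z_n\to z$ in $\BD$.
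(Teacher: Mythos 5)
Your proposal is correct and follows exactly the route the paper intends: the paper gives no proof, declaring the corollary an immediate consequence of Theorem \ref{compactness}, and your argument --- transferring compactness through the extension/restriction homeomorphisms $E$ and $R$ as in Proposition \ref{proo}, invoking the extension remark before Theorem \ref{compactness} for $\Psi\in\Lip_A(\overline{\BD},K(X,Y),\beta)$, and matching (\ref{y}) with (\ref{zzz}) --- is precisely the intended filling-in of that claim.
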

\bibliographystyle{amsplain}

\end{document}